\numberwithin{equation}{section}
\theoremstyle{plain}
\newtheorem{theorem}{Theorem}[section]
\newtheorem{rem}[theorem]{Remark}
\def\X{\mathbb{Y}}
\begin{document}
\title[Weighted Levin-Cochran-Lee type inequalities   ]{Anisotropic Weighted Levin-Cochran-Lee type inequalities on homogeneous Lie groups}

\author[Michael Ruzhansky]{Michael Ruzhansky}
\address{
	Michael Ruzhansky:
	\endgraf
	Department of Mathematics: Analysis, Logic and Discrete Mathematics
	\endgraf
	Ghent University, Belgium
	\endgraf
	and
	\endgraf
	School of Mathematical Sciences
	\endgraf
	Queen Mary University of London
	\endgraf
	United Kingdom
	\endgraf
	{\it E-mail-} {\rm michael.ruzhansky@ugent.be}
}

\author[A. Shriwastawa]{Anjali Shriwastawa}
\address{
 Anjali Shriwastawa:
 \endgraf
  DST-Centre for Interdisciplinary Mathematical Sciences  \endgraf
  Banaras Hindu University, Varanasi-221005, India
  \endgraf
  {\it E-mail-} {\rm anjalisrivastava7077@gmail.com}}

\author[B.Tiwari]{Bankteshwar Tiwari}
\address{
 Bankteshwar Tiwari :
 \endgraf
  DST-Centre for Interdisciplinary Mathematical Sciences  \endgraf
  Banaras Hindu University, Varanasi-221005, India
  \endgraf
  {\it E-mail-} {\rm banktesht@gmail.com}}


\subjclass[2010]{26D10, 22E30, 45J05.}
\keywords{ Integral Hardy inequalities, Anisotropic Levin-Cochran-Lee inequalities, Homogeneous Lie groups, Quasi-norm, Weighted exponential inequalities}

\begin{abstract} In this paper, we first prove the weighted  Levin-Cochran-Lee type inequalities on homogeneous Lie groups for arbitrary weights, quasi-norms, and $L^p$-and $L^q$-norms. Then,  we derive a sharp weighted inequality involving specific  weights given in the form of quasi-balls in homogeneous Lie groups. Finally, we also calculate the sharp constants for the aforementioned  inequalities.
\end{abstract}
\maketitle
\allowdisplaybreaks
\section{History and Introduction} \label{Intro}

 In 1984, Cochran and Lee  rediscovered an  exponential weighted inequality in their  paper \cite{CL}, which was proved earlier in an unnoticed paper of V. Levin \cite{L1} in 1938 written in the Russsian language. We recall the following exponential weighted inequalities proved in the papers of Levin \cite{L1} and Cochran and Lee  \cite{CL}.  
\begin{theorem}
 Let $\epsilon$ and $a$ be two real numbers. Suppose that $f$ is a positive function such that the function $t^{\epsilon-1}\log f(t)$ is locally integrable on $(0,\infty).$ Then the inequality
\begin{align}\label{Levin2}
    \int_0^\infty\left[\exp\left(\epsilon\,x^{-\epsilon}\int_0^x t^{\epsilon-1}\log\,f(t)\,dt\right)\right]x^{a}\,dx\le
    \left(\exp\frac{a+1}{\epsilon}\right)\int_0^\infty x^a\,f(x)\,dx
\end{align} holds for $\epsilon>0,$ and 
\begin{align}\label{Levin1}
  \int_0^\infty\left[\exp\left(-\epsilon\,x^{-\epsilon}\int_x^\infty t^{\epsilon-1}\log\,f(t)\,dt\right)\right]x^{a}\,dx\le
    \left(\exp\frac{a+1}{\epsilon}\right)\int_0^\infty x^a\,f(x)\,dx  
\end{align} holds for $\epsilon>0.$ Moreover, the constant $\exp\left(\frac{a}{\epsilon}\right)$ is the best possible constant. 
\end{theorem}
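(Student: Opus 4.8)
The plan is to reduce both weighted exponential estimates to the pointwise arithmetic--geometric mean inequality after absorbing the weight into the integrand. The crucial observation for \eqref{Levin2} is that $d\mu_x(t)=\epsilon\,x^{-\epsilon}t^{\epsilon-1}\,dt$ is a probability measure on $(0,x)$, since $\epsilon x^{-\epsilon}\int_0^x t^{\epsilon-1}\,dt=1$, so the exponent is exactly the weighted geometric mean $\int_0^x \log f\,d\mu_x$. A naive application of Jensen's inequality $\exp(\int \log f\,d\mu_x)\le \int f\,d\mu_x$ loses the sharp constant and, worse, produces a divergent iterated integral, so first I would introduce a free parameter $\beta$ and split $\log f(t)=\log\!\big(f(t)t^{-\beta}\big)+\beta\log t$. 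The elementary identity $\epsilon x^{-\epsilon}\int_0^x t^{\epsilon-1}\log t\,dt=\log x-\tfrac1\epsilon$ (obtained via the substitution $u=t^\epsilon$) shows that the $\beta\log t$ part contributes the explicit factor $x^{\beta}e^{-\beta/\epsilon}$, while Jensen applied to the remaining probability average yields
\begin{equation*}
\exp\!\Big(\epsilon x^{-\epsilon}\!\int_0^x t^{\epsilon-1}\log f(t)\,dt\Big)\le x^{\beta}e^{-\beta/\epsilon}\,\epsilon x^{-\epsilon}\!\int_0^x t^{\epsilon-1-\beta}f(t)\,dt.
\end{equation*}

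Next I would multiply by $x^a$, integrate in $x$, and interchange the order of integration by Tonelli (all integrands are nonnegative). The inner integral $\int_t^\infty x^{\beta+a-\epsilon}\,dx$ converges precisely when $\beta<\epsilon-a-1$, and in that range equals $t^{\beta+a-\epsilon+1}/(\epsilon-a-\beta-1)$; after cancellation the powers of $t$ collapse to $t^a$, giving
\begin{equation*}
\int_0^\infty \exp\!\Big(\epsilon x^{-\epsilon}\!\int_0^x t^{\epsilon-1}\log f\Big)x^a\,dx\le \frac{\epsilon\,e^{-\beta/\epsilon}}{\epsilon-a-\beta-1}\int_0^\infty x^a f(x)\,dx.
\end{equation*}
It remains to optimize $C(\beta)=\epsilon e^{-\beta/\epsilon}/(\epsilon-a-\beta-1)$ over the admissible range. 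Writing $s=\epsilon-a-1-\beta>0$ reduces this to minimizing $e^{s/\epsilon}/s$, whose minimum is at $s=\epsilon$; this corresponds to $\beta=-(a+1)$ and gives exactly $C=e^{(a+1)/\epsilon}=\exp\frac{a+1}{\epsilon}$, the constant in \eqref{Levin2}.

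For the companion estimate \eqref{Levin1} I would not repeat the argument but deduce it from \eqref{Levin2} by the order-reversing change of variables $x\mapsto 1/x$, $t\mapsto 1/t$ applied to $g(t)=f(1/t)$: this sends $\int_0^x$ to an integral over $(1/x,\infty)$, converts the weight into its dual form, flips the relevant exponents, and transports the constant, after which relabelling the weight exponent recovers the stated inequality. (Alternatively the same $\beta$-splitting and Tonelli computation can be run directly on $(x,\infty)$.)

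Finally, for sharpness I would test with the family saturating the arithmetic--geometric mean step, namely $f(t)=t^{-(a+1)}$ (the case where $f(t)t^{-\beta}$ is constant for the optimal $\beta$), suitably truncated to $[1,R]$, since this critical power makes both sides only logarithmically divergent. Computing both integrals for the truncated function and letting $R\to\infty$ drives the ratio of the two sides to $\exp\frac{a+1}{\epsilon}$, showing the constant cannot be lowered. The main obstacle I anticipate is not any single step but the bookkeeping tying them together: one must fix the parameter $\beta$ before knowing the optimum, keep the Tonelli integral convergent throughout (the constraint $\beta<\epsilon-a-1$ is precisely what the optimal $\beta=-(a+1)$ respects), and control the truncation limit in the sharpness part so that contributions from the endpoints of the finite interval are negligible.
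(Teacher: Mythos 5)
Your argument for \eqref{Levin2} is correct and essentially complete: the normalization $\epsilon x^{-\epsilon}\int_0^x t^{\epsilon-1}\,dt=1$, the splitting $\log f=\log(ft^{-\beta})+\beta\log t$ together with the identity $\epsilon x^{-\epsilon}\int_0^x t^{\epsilon-1}\log t\,dt=\log x-\tfrac1\epsilon$, Jensen's inequality, Tonelli, and the optimization $s=\epsilon-a-1-\beta=\epsilon$ (so $\beta=-(a+1)$, which indeed satisfies the convergence constraint precisely because $\epsilon>0$) giving $C=\exp\frac{a+1}{\epsilon}$ all check out. Note, however, that the paper does not prove this theorem at all: it is recalled from Levin and Cochran--Lee with citations, and the method the paper uses for its own generalizations (Theorems \ref{Th2} and \ref{THM2}) is quite different --- one starts from the two-weight integral Hardy inequality of Theorem \ref{TH41}, substitutes $p\mapsto p/\alpha$, $q\mapsto q/\alpha$, $f\mapsto f^\alpha$, and lets $\alpha\to0^+$ so that power means converge to geometric means and the Hardy constant converges to an exponential constant. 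Your direct Jensen-plus-splitting argument is more elementary and produces the sharp constant in one pass, whereas the limiting argument only yields an upper bound for the constant and needs a separate test-function computation for the lower bound; on the other hand, the paper's route transfers immediately to arbitrary weights and to the homogeneous-group setting, which a hand-tuned power splitting does not.

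Two caveats. First, the second inequality cannot hold as literally stated for $\epsilon>0$: the kernel $-\epsilon x^{-\epsilon}t^{\epsilon-1}$ is then negative and $\int_x^\infty t^{\epsilon-1}\,dt=\infty$, so the exponent is not a normalized average. Your own substitution $x\mapsto1/x$, $t\mapsto1/t$ makes this visible: applied to \eqref{Levin2} with parameter $\epsilon'>0$ and weight $a'$ it produces \eqref{Levin1} with $\epsilon=-\epsilon'<0$ and $a=-a'-2$, with constant $e^{(a'+1)/\epsilon'}=e^{(a+1)/\epsilon}$; this matches the version recorded after the bibliography, where the complementary inequality is stated for $\epsilon<0$. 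You should therefore say explicitly that what you prove is \eqref{Levin1} for $\epsilon<0$, i.e.\ that the hypothesis ``$\epsilon>0$'' in the second part of the statement (and the claimed best constant $\exp(a/\epsilon)$ in place of $\exp\frac{a+1}{\epsilon}$) are misprints rather than targets. Second, your sharpness sketch with $f(t)=t^{-(a+1)}$ ``suitably truncated to $[1,R]$'' must specify $f$ off $[1,R]$ so that $\int_0^\infty x^af(x)\,dx$ is finite for every sign of $a+1$; a cleaner choice is the two-sided perturbation $f_\delta(t)=t^{-(a+1)+\epsilon\delta}$ for $t<1$ and $f_\delta(t)=t^{-(a+1)-\epsilon\delta}$ for $t\ge1$ with $\delta\to0^+$, which is exactly the family the paper uses in its final Remark for the group case.
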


The inequality \eqref{Levin2} is called the Levin-Cochran-Lee type
inequality and its complementary inequality \eqref{Levin1} was proved by E. R. Love in \cite{L2}, which was again
reproved by G.-S. Yang and Y. J. Lin \cite{YL}. It is worth noting that  inequality \eqref{Levin2} is a generalization of the famous Knopp inequality \cite{K}, which can be obtained by setting $a=0$ and $\epsilon=1$ in \eqref{Levin2}.
Thereafter, several works have been devoted to the study of these exponential type inequalities in different forms and in
different settings such as higher dimensional Euclidean spaces and Euclidean balls, by many authors.  It is clearly impossible to give a complete
overview of the available literature, therefore we refer to the books, surveys and papers \cite{CG, CL,CPP,DHK,JPW,K,KPS} and references
therein.

 Čižmešija et al. \cite{CPP} investigated an $n$- dimensional analogue of \eqref{Levin2} by replacing the intervals $(0,\infty)$ by $\mathbb{R}^n$ and the means are considered over the balls in $\mathbb{R}^n$ centered as origin. We state this inequality as follows:
 \begin{theorem}
 Let $f$ be a positive function on $\mathbb{R}^n$ and let $\mathbb{B}(0,|x|)$ be the ball in $\mathbb{R}^n$ with radius $|x|,\, x\in \mathbb{R}^n,$ centered at the origin, with its volume (with respect to the Lebesgue measure on $\mathbb{R}^n$) denoted by $|\mathbb{B}(0,|x|)|$. Then we have the following inequality
\begin{align}\label{INQ12}
\nonumber\int_{\mathbb{R}^n}\Bigg[\exp\Bigg(\epsilon\,{|\mathbb{B}(0,|x|)|}^{-\epsilon}\int_{\mathbb{B}(0,|x|)} {|\mathbb{B}(0,|y|)|}^{\epsilon-1}\log\,f(y)\,dy&\Bigg)\Bigg]|\mathbb{B}(0,|x|)|^a\,dx\\& \leq 
   \left( \exp{\frac{a+1}{\epsilon}}\right)\int_{\mathbb{R}^n} f(x)\,|\mathbb{B}(0,|x|)|^adx,    
\end{align}
where $a$ and $\epsilon>0$ are two real numbers. Moreover,  the constant  $\exp{\frac{a+1}{\epsilon}}$ appearing in  \eqref{INQ12} is a sharp constant. 
 \end{theorem}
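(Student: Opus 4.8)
The plan is to reduce the $n$-dimensional inequality \eqref{INQ12} to the one-dimensional Levin-Cochran-Lee inequality \eqref{Levin2} by exploiting the radial structure of the balls and passing to polar coordinates. The essential observation is that the weight $|\mathbb{B}(0,|x|)|$ is, up to a dimensional constant, a power of $|x|$: indeed $|\mathbb{B}(0,|x|)| = \omega_n |x|^n$, where $\omega_n$ is the volume of the unit ball. I would therefore first write every occurrence of $|\mathbb{B}(0,|x|)|$ in terms of $|x|^n$, and then integrate in polar coordinates $x = r\sigma$, $y = s\sigma'$, with $dx = r^{n-1}\,dr\,d\sigma$.

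First I would handle the inner integral. Over the ball $\mathbb{B}(0,|x|)$ with $|x| = r$, polar coordinates give
\begin{align*}
\int_{\mathbb{B}(0,r)} |\mathbb{B}(0,|y|)|^{\epsilon-1}\log f(y)\,dy
= \int_{\mathbb{S}^{n-1}}\int_0^r (\omega_n s^n)^{\epsilon-1}\,\log f(s\sigma')\, s^{n-1}\,ds\,d\sigma'.
\end{align*}
The key simplification is that the radial densities combine: $s^{n(\epsilon-1)}\cdot s^{n-1} = s^{n\epsilon-1}$, which is exactly the kernel $t^{\tilde\epsilon-1}$ appearing in \eqref{Levin2} with the effective exponent $\tilde\epsilon = n\epsilon$. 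This suggests performing the change of variables $t = s^n$ (so $dt = n s^{n-1}\,ds$) to convert the radial integral into an integral against $t^{\epsilon-1}$, matching the one-dimensional form precisely. The cleanest route, though, is to integrate out the angular variable or to reduce to a radial problem: since the balls are centered at the origin, one expects the extremal behavior to be radial, so I would either first prove the inequality for radial $f$ and then reduce the general case by an averaging (Jensen) argument over spheres, or directly substitute $t = \omega_n r^n = |\mathbb{B}(0,r)|$ as the new radial variable throughout.

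The cleanest implementation is the substitution $u = |\mathbb{B}(0,|x|)| = \omega_n |x|^n$ in the outer integral and $v = |\mathbb{B}(0,|y|)| = \omega_n |y|^n$ in the inner one. Under this change the outer measure transforms as $dx = r^{n-1}\,dr\,d\sigma$ with $u = \omega_n r^n$, giving $du = n\omega_n r^{n-1}\,dr$, so $r^{n-1}\,dr = \frac{1}{n\omega_n}\,du$; similarly for $v$. After integrating out the angular variables (which is where a Jensen/convexity step enters to pass from $\log f(s\sigma')$ to a radial function), the $n$-dimensional inequality collapses term-by-term onto \eqref{Levin2} in the variable $u$, with the same parameters $a$ and $\epsilon$. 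The factors of $\omega_n$ and $n$ cancel cleanly because they appear symmetrically on both sides, which is precisely why the sharp constant $\exp\frac{a+1}{\epsilon}$ is preserved without any dimensional correction.

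The main obstacle I anticipate is the angular reduction: the function $f$ is not assumed radial, so $\log f(y)$ genuinely depends on the direction $\sigma'$, whereas the exponential is applied after integrating over the full ball. To push $\log f$ through the exponential correctly I would invoke Jensen's inequality on the inner averaging in the angular variable, replacing $f$ by its geometric-mean average over spheres and reducing to the radial case, where the one-dimensional theorem applies directly. The delicate point is verifying that this Jensen step does not lose the sharp constant, i.e. that equality can still be approached; I expect sharpness to follow by testing \eqref{INQ12} on radial functions of the form $f(x) = |\mathbb{B}(0,|x|)|^{\beta} = (\omega_n|x|^n)^\beta$ for an appropriate exponent $\beta$, which are exactly the (near-)extremizers inherited from the one-dimensional case, and then letting the relevant parameter tend to its critical value to confirm the constant cannot be improved.
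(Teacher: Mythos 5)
Your reduction is correct, and it would work as written: in polar coordinates the left-hand side of \eqref{INQ12} becomes \emph{exactly} the one-dimensional Levin--Cochran--Lee expression in the variable $u=|\mathbb{B}(0,|x|)|=\omega_n|x|^n$ applied to the spherical geometric mean $\tilde G(u)=\exp\bigl(\frac{1}{|\mathbb{S}^{n-1}|}\int_{\mathbb{S}^{n-1}}\log f(s\sigma')\,d\sigma'\bigr)$ (no inequality is needed on that side, since the integrand is already $\log f$), the normalizing factors $n$ and $\omega_n$ cancel because $du=|\mathbb{S}^{n-1}|s^{n-1}ds$, and the only genuine inequality is Jensen's $\tilde G\le\tilde A$ applied to the \emph{right}-hand side, which is an equality for radial $f$ and therefore costs nothing in the constant; sharpness then transfers from the one-dimensional case via radial test functions. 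This is, however, a genuinely different route from the one taken in this paper. The statement you are proving is quoted here from \cite{CPP} without proof, and the paper recovers it only as the special case $\mathbb{G}=\mathbb{R}^n$, $p=q=1$, $a=b$ of Theorem \ref{THM2}, whose proof does not pass through the one-dimensional inequality at all: it starts from the two-weight integral Hardy inequality of Theorem \ref{TH41}, substitutes $p/\alpha$, $q/\alpha$, $f^\alpha$, and lets $\alpha\to0^+$ so that the power means $\bigl(\frac{1}{|\mathbb{B}|}\int_{\mathbb{B}}f^\alpha\bigr)^{1/\alpha}$ converge to geometric means (with a Dirac-delta limit identifying the resulting weight condition $D_Q$), and only afterwards specializes to the power weights $|\mathbb{B}(0,|x|)|^a$, $|\mathbb{B}(0,|x|)|^b$ via the rescaling $r=r_1^{1/\epsilon}$. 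Your argument is shorter and more elementary but leans on the exact radial structure of the power weights and on already having the sharp one-dimensional result; the paper's Hardy-inequality route is heavier but yields arbitrary weights $u,v$, the full range $0<p\le q<\infty$, and arbitrary quasi-norms on general homogeneous groups, where a reduction to a single radial variable with the classical Levin inequality is no longer available in the same clean form.
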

 
 The inequality \eqref{INQ12} was further generalized by Jain et al. \cite{JPW} to a more general situation involving general weight functions on the Euclidean space. 
 
 The main objective of this paper is to prove a Levin-Cochran-Lee type inequality involving general weight functions on homogeneous (Lie) groups equipped with a quasi-norm
$|\cdot|$ and  a family of
dilations compatible with the group law. For a detailed description of analysis on homogeneous groups, we refer to \cite{FS,FR, RS}.
Particular examples of homogeneous groups are the Euclidean space $\mathbb{R}^n$ (in which case $Q = n$), the
Heisenberg group, as well as general stratified groups (homogeneous Carnot groups) and graded groups. Recently, Hardy type inequalities and their best constants have been extensively investigated in non-commutative settings (e.g. Heisenberg groups, graded groups, homogeneous groups); we cite \cite{FL12,RS, RY, RS1, RST1} just to a mention a few of them.

Recently, the first author and Verma \cite{RV} obtained several characterizations of weights for two-weight integral Hardy inequalities to hold on general metric measure spaces possessing polar decompositions for the range $1<p \leq q<\infty$ (see, \cite{RV1} for the case $0<q<p$ and $1<p<\infty$). Using this, one deduced the weighted integral Hardy inequality on homogeneous groups, hyperbolic spaces and Cartan-Hadamard manifolds. In particular, one proved the following theorem  \cite{RV} which will be useful to establish  results of the present paper. 
\begin{theorem}\label{TH41} Let $\mathbb{G}$ be a homogeneous group with the homogeneous dimension $Q,$ and let $1<p\le q<\infty.$
Suppose that $u$ and $v$ are two weight functions on $\mathbb{G}.$ Then the inequality 
\begin{align}\label{Th1}
    \left(\int_{\mathbb{G}} \left(\int_{\mathbb{B}(0,|x|)} f(y)\,dy\right)^q u(x)\,dx \right)^\frac{1}{q}\le C\left(\int_{\mathbb{G}}f^p(x)\,v(x) \,dx\right)^\frac{1}{p}
\end{align}
holds for all non-negative functions $f$ on $\mathbb{G}$ if and only if 
\begin{align*} 
   \textbf{A}_Q:=\sup_{x\in\mathbb{G}}\left(\int_{\mathbb{G}\backslash\mathbb{B}(0,|x|)}u(y)\,dy\right) ^\frac{1}{q}\left(\int_{\mathbb{B}(0,|x|)}v^\frac{1}{1-p}(y)\, dy\right)^\frac{p-1}{p}<\infty,
\end{align*}
and the best constant $C$ in
\eqref{Th1} can be estimated in the following way:
\begin{align*}
   \textbf{A}_Q\le C\le \textbf{A}_Q \left(\frac{p}{p-1}\right)^\frac{p-1}{p}p^\frac{1}{q}.
\end{align*}

\end{theorem}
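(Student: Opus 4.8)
The plan is to reduce the group inequality \eqref{Th1} to the classical one-dimensional two-weight Hardy inequality by exploiting the polar decomposition available on any homogeneous group. Writing $\mathbb{S} = \{x \in \mathbb{G} : |x| = 1\}$ for the unit quasi-sphere, there is a positive measure $\sigma$ on $\mathbb{S}$ such that $\int_{\mathbb{G}} h(x)\,dx = \int_0^\infty \int_{\mathbb{S}} h(r\omega)\,r^{Q-1}\,d\sigma(\omega)\,dr$ for every integrable $h$. Since the quasi-ball $\mathbb{B}(0,|x|)$ depends on $x$ only through $r = |x|$, the inner average $F(r) := \int_{\mathbb{B}(0,r)} f(y)\,dy = \int_0^r g(s)\,ds$, where $g(s) := \int_{\mathbb{S}} f(s\omega)\,s^{Q-1}\,d\sigma(\omega)$. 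Setting $U(r) := r^{Q-1}\int_{\mathbb{S}} u(r\omega)\,d\sigma(\omega)$, the left-hand side of \eqref{Th1} becomes $\int_0^\infty F(r)^q\,U(r)\,dr$ verbatim, so the whole issue is to control $g$ in terms of $f$ and then invoke a scalar Hardy inequality.

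For sufficiency I would pass from $f$ on $\mathbb{G}$ to the radial profile $g$ by Hölder's inequality on each sphere of radius $s$: splitting $f = (f\,v^{1/p})\cdot v^{-1/p}$ and applying Hölder with exponents $p,p'$ gives $g(s)^p \le \big(\int_{\mathbb{S}} f^p v\, s^{Q-1}\,d\sigma\big)\big(\int_{\mathbb{S}} v^{1/(1-p)} s^{Q-1}\,d\sigma\big)^{p-1}$. Hence, defining the one-dimensional weight $V(s) := \big(\int_{\mathbb{S}} v(s\omega)^{1/(1-p)} s^{Q-1}\,d\sigma(\omega)\big)^{1-p}$, one obtains $\int_0^\infty g^p V\,ds \le \int_{\mathbb{G}} f^p v\,dx$. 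The problem reduces to the scalar inequality $\big(\int_0^\infty (\int_0^r g)^q U\,dr\big)^{1/q} \le C \big(\int_0^\infty g^p V\big)^{1/p}$, which by the Muckenhoupt--Bradley theorem for $1<p\le q<\infty$ holds if and only if $A := \sup_{r>0}\big(\int_r^\infty U\big)^{1/q}\big(\int_0^r V^{1/(1-p)}\big)^{(p-1)/p} < \infty$, with $A \le C \le A\,(p')^{1/p'} p^{1/q}$. A direct computation with the polar decomposition identifies $\int_r^\infty U = \int_{\mathbb{G}\setminus\mathbb{B}(0,r)} u$ and $\int_0^r V^{1/(1-p)} = \int_{\mathbb{B}(0,r)} v^{1/(1-p)}$, so $A = \mathbf{A}_Q$ and the upper bound $C \le \mathbf{A}_Q\,(p/(p-1))^{(p-1)/p} p^{1/q}$ follows at once.

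For necessity and the lower bound $\mathbf{A}_Q \le C$ I would test \eqref{Th1} against $f(y) = v(y)^{1/(1-p)}\chi_{\mathbb{B}(0,R)}(y)$ for fixed $R>0$. On the one hand $\int_{\mathbb{B}(0,|x|)} f = \int_{\mathbb{B}(0,R)} v^{1/(1-p)} =: B_R$ for every $|x|\ge R$, so the left-hand side dominates $B_R\big(\int_{\mathbb{G}\setminus\mathbb{B}(0,R)} u\big)^{1/q}$; on the other hand, since $v^{p/(1-p)}\cdot v = v^{1/(1-p)}$, the right-hand side equals $C\,B_R^{1/p}$. Cancelling $B_R^{1/p}$ yields $\big(\int_{\mathbb{B}(0,R)} v^{1/(1-p)}\big)^{(p-1)/p}\big(\int_{\mathbb{G}\setminus\mathbb{B}(0,R)} u\big)^{1/q} \le C$, and taking the supremum over $R$ (equivalently over $x\in\mathbb{G}$, as every quantity depends on $x$ only through $|x|$) gives $\mathbf{A}_Q \le C$.

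The main obstacle I anticipate is the bookkeeping of the sharp constants rather than the qualitative equivalence: one must check that the Hölder step on the spheres introduces no loss beyond what is already encoded in $V$, so that the constant transferred from the scalar Muckenhoupt--Bradley inequality is exactly $\mathbf{A}_Q\,(p/(p-1))^{(p-1)/p} p^{1/q}$. A secondary technical point is the degenerate cases in the test-function argument, namely when $\int_{\mathbb{B}(0,R)} v^{1/(1-p)}$ is zero or infinite; these I would handle by a standard truncation, replacing $v^{1/(1-p)}$ by $\min(v^{1/(1-p)},N)$ on $\mathbb{B}(0,R)$ and letting $N\to\infty$ after cancelling the (now finite) common factor.
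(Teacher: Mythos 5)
Your argument is correct, but it is worth noting that the paper itself does not prove this statement at all: Theorem \ref{TH41} is imported verbatim from the Ruzhansky--Verma paper \cite{RV}, where it is obtained as the homogeneous-group specialization of a two-weight integral Hardy inequality on general metric measure spaces admitting a polar decomposition (with five equivalent characterizing conditions $\mathcal D_1$--$\mathcal D_5$). Your route is genuinely different and more self-contained: you exploit the fact that $\mathbb{B}(0,|x|)$ depends on $x$ only through $r=|x|$ to rewrite the left-hand side exactly as a one-dimensional Hardy functional, you push the right-hand side down to the radial profile $g$ by H\"older's inequality on each quasi-sphere (which is precisely what makes the effective one-dimensional weight $V(s)=\bigl(\int_{\mathfrak S}v^{1/(1-p)}(s\omega)s^{Q-1}d\sigma\bigr)^{1-p}$ appear with no loss, since $V^{1/(1-p)}$ integrates back to $\int_{\mathbb{B}(0,r)}v^{1/(1-p)}$), and you then quote the scalar two-weight theorem on the half-line; the test-function argument for the lower bound $\mathbf A_Q\le C$ is the standard one and is carried out correctly, including the truncation for degenerate $B_R$. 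What your approach buys is an elementary proof resting only on the classical one-dimensional result; what the paper's citation buys is generality and the additional equivalent conditions. The one point you should pin down is the constant in the scalar theorem you invoke: Bradley's original upper bound is $A\,(1+q/p')^{1/q}(1+p'/q)^{1/p'}$, not $A\,(p')^{1/p'}p^{1/q}$, so to reproduce the stated bound $C\le \mathbf A_Q\,(p/(p-1))^{(p-1)/p}p^{1/q}$ you need the sharper version of the half-line theorem (for instance the one in \cite{RV} or \cite{KPS} specialized to $(0,\infty)$); with that reference in place the reduction transfers the constant exactly as you claim.
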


Very recently,  we have proved  a sharp version of Theorem \ref{TH41} in \cite{RST}. In fact, we have also calculated the precise value of sharp constants in respective inequalities on homogeneous groups. Using Theorem \ref{TH41} we prove the following result which is one of the main results of this paper. 

\begin{theorem}\label{Th2int}
Let $\mathbb{G}$ be a homogeneous group with the homogeneous dimension  $Q$ equipped with a quasi norm $|\cdot|$ and 
let $0<p\leq q<\infty.$  Suppose that $u$ and $v$ are two positive weight functions on $\mathbb{G}.$ Then, there exists a positive constant $C$ such that, for all positive functions $f$ on $\mathbb{G}$, the following inequality holds 
\begin{equation} \label{mainin}
    \left(\int_{\mathbb{G}} \left[ \exp \left( \frac{1}{|\mathbb{B}(0, |x|)|} \int_{\mathbb{B}(0, |x|)} \log f(y)\, dy \right) \right]^q u(x)\, dx \right) ^\frac{1}{q}\leq C \left(\int_{\mathbb{G}} f^p(x)\, v(x)\,dx \right)^{\frac{1}{p}},
\end{equation}
provided that 
\begin{equation}\label{coeff33in}
    D_Q:= \sup_{x \in \mathbb{G}} |\mathbb{B}(0, |x|)|^{\frac{1}{q}-\frac{1}{p}} u_1^{\frac{1}{q}}(x) \left[ \exp \left( \frac{1}{|\mathbb{B}(0, |x|)|} \int_{\mathbb{B}(0, |x|)} \log \frac{1}{v(y)}\,dy \right)\right]^{\frac{1}{p}}<\infty.
\end{equation} Here $u_1$ is the spherical average of $u,$ given by 
\begin{align}\label{U1in}
u_1(x):=\frac{1}{|\mathfrak{S}|} \int_{\mathfrak{S}} u(|x|\sigma)\, d\sigma,
\end{align} where $\mathfrak{S}= \{x \in \mathbb{G}: |x|=1\} \subset \mathbb{G}$ is the unit sphere with respect to the quasi-norm $|\cdot|$.

Moreover, the optimal constant $C$ in \eqref{main} can be estimated as follows:
\begin{equation}\label{DQin}
     0<C \leq \left(\frac{p}{q} \right)^{\frac{1}{q}} e^{\frac{1}{p}} D_Q.
\end{equation}
\end{theorem}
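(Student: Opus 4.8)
The plan is to derive the exponential (geometric-mean) inequality \eqref{mainin} from the linear weighted Hardy inequality of Theorem \ref{TH41} by a power-mean approximation followed by a limiting procedure in the exponent. Two reductions organise the argument. First, because the inner average is taken over the quasi-ball $\mathbb{B}(0,|x|)$, the map $x\mapsto\exp\big(\frac{1}{|\mathbb{B}(0,|x|)|}\int_{\mathbb{B}(0,|x|)}\log f\big)$ depends on $x$ only through $|x|$, i.e.\ it is radial. Writing the left-hand side in polar coordinates and integrating first over the unit sphere $\mathfrak{S}$, the weight $u$ may therefore be replaced by its spherical average $u_1$ from \eqref{U1in} without changing the left-hand side; this is exactly why $u_1$, and not $u$, enters the condition \eqref{coeff33in}. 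Second, for every $s\in(0,p)$ Jensen's inequality (convexity of $\exp$, i.e.\ AM--GM) gives the pointwise bound
\[
\exp\left(\frac{1}{|\mathbb{B}(0,|x|)|}\int_{\mathbb{B}(0,|x|)}\log f(y)\,dy\right)\le\left(\frac{1}{|\mathbb{B}(0,|x|)|}\int_{\mathbb{B}(0,|x|)}f(y)^{s}\,dy\right)^{1/s},
\]
which replaces the geometric mean by a genuine $s$-average and is made sharp by letting $s\to0^{+}$.

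Next I would substitute $F=f^{s}$ and absorb the normalising factor $|\mathbb{B}(0,|x|)|^{-q/s}$ into the left weight. Setting $p_H:=p/s>1$ and $q_H:=q/s\ge p_H$ (so that $1<p_H\le q_H<\infty$, as Theorem \ref{TH41} requires) and applying that theorem to $F$ with left weight $U(x)=|\mathbb{B}(0,|x|)|^{-q_H}u_1(x)$ and right weight $V=v$, one obtains, using $F^{p_H}=f^{p}$,
\[
\left(\int_{\mathbb{G}}\left(\frac{1}{|\mathbb{B}(0,|x|)|}\int_{\mathbb{B}(0,|x|)}f^{s}\,dy\right)^{q/s}u_1(x)\,dx\right)^{s/q}\le C_H\left(\int_{\mathbb{G}}f^{p}(x)\,v(x)\,dx\right)^{s/p}.
\]
By the Jensen bound the left-hand side dominates $\big(\int_{\mathbb{G}}[\exp(\cdots)]^{q}u_1\big)^{s/q}=\big(\int_{\mathbb{G}}[\exp(\cdots)]^{q}u\big)^{s/q}$ (the last equality by the radial reduction). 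Raising to the power $1/s$ then yields \eqref{mainin} with $C\le C_H^{1/s}$, where $C_H$ is the Hardy constant for the parameters $p_H,q_H$ and weights $U,V$.

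It remains to estimate $C_H^{1/s}$ and optimise in $s$. Inserting the upper bound $C_H\le\mathbf{A}_Q\big(\tfrac{p_H}{p_H-1}\big)^{\frac{p_H-1}{p_H}}p_H^{1/q_H}$ from Theorem \ref{TH41}, I would compute the functional $\mathbf{A}_Q=\mathbf{A}_Q(s)$ for $U,V$ in polar coordinates via $|\mathbb{B}(0,|x|)|=\frac{|\mathfrak{S}|}{Q}|x|^{Q}$. The exterior factor $\int_{\mathbb{G}\setminus\mathbb{B}(0,|x|)}U$ reduces to a one-dimensional integral of $u_1$ against $r^{-Q(q_H-1)-1}$, while the interior factor $\big(\int_{\mathbb{B}(0,|x|)}v^{1/(1-p_H)}\big)^{\frac{p_H-1}{p_H}}$ is treated through $v^{1/(1-p_H)}=\exp\big(-\tfrac{s}{p-s}\log v\big)$ and a first-order expansion. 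Letting $s\to0^{+}$, the $s$-dependent universal constants combine as $\big(\tfrac{p_H}{p_H-1}\big)^{\frac{p_H-1}{p_H s}}\to e^{1/p}$ and $p_H^{1/(q_H s)}(q_H-1)^{-1/q}\to(p/q)^{1/q}$, whereas the weight-dependent part of $\mathbf{A}_Q(s)^{1/s}$ converges to $|\mathbb{B}(0,|x|)|^{1/q-1/p}u_1^{1/q}(x)\big[\exp\big(\frac{1}{|\mathbb{B}(0,|x|)|}\int_{\mathbb{B}(0,|x|)}\log\frac{1}{v}\big)\big]^{1/p}$. Taking the supremum over $x$ reproduces precisely $D_Q$ from \eqref{coeff33in}, giving the estimate \eqref{DQin}; finiteness of $D_Q$ forces $\mathbf{A}_Q(s)<\infty$ for small $s$, hence a finite $C$, and positivity of $C$ is immediate since the left-hand side of \eqref{mainin} is strictly positive for nontrivial $f$.

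The principal difficulty is this last step, the evaluation of the limit of the Hardy constant. After rescaling $r=|x|t$, the exterior average $\int_{\mathbb{G}\setminus\mathbb{B}(0,|x|)}|\mathbb{B}(0,|y|)|^{-q_H}u_1(y)\,dy$ is a probability average of $u_1$ over $[\,|x|,\infty)$ whose density concentrates at the radius $|x|$ as $q_H=q/s\to\infty$; a Watson/Laplace-type argument shows it behaves like $u_1(|x|)\big/\big(Q(q_H-1)\big)$ and thereby recovers the pointwise value $u_1(x)$ appearing in \eqref{coeff33in}, the factor $(q_H-1)^{-1/q}$ being exactly the one that pairs with $p_H^{1/q}$ to produce $(p/q)^{1/q}$. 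Controlling this concentration uniformly in $x$ for arbitrary weights, and justifying the interchange of the supremum over $x$ with the limit $s\to0^{+}$, is the delicate point; the interior $v$-factor, by contrast, is handled cleanly by the AM--GM inequality. Once these asymptotics are secured, both \eqref{mainin} and the bound \eqref{DQin} follow.
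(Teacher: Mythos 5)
Your proposal follows essentially the same route as the paper's proof: both reduce \eqref{mainin} to the linear Hardy inequality of Theorem \ref{TH41} applied with parameters $p/\alpha,\,q/\alpha$, the modified left weight $|\mathbb{B}(0,|x|)|^{-q/\alpha}u$, and the test function $f^{\alpha}$, then let $\alpha\to 0^{+}$, with the exterior integral concentrating (a Dirac-delta/Laplace argument) to produce the spherical average $u_1$ and the interior $v$-factor converging to the exponential of the log-average of $1/v$, yielding exactly $\left(\tfrac{p}{q}\right)^{1/q}e^{1/p}D_Q$. The only (harmless) stylistic difference is that you invoke Jensen's inequality to get the exponential bound pointwise for each fixed $s$ and pass to the limit only in the constant, whereas the paper passes to the limit $\alpha\to0^{+}$ in both sides of the power-mean inequality; the interchange of the supremum in $x$ with the limit, which you flag as the delicate point, is likewise taken for granted in the paper.
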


We will also prove a conjugate version (see Theorem \ref{congthm}) of Theorem \ref{Th2int}. Furthermore, we establish some stronger exponential inequalities on  the quasi-balls on homogeneous Lie groups (see Theorem \ref{THM2} and Theorem \ref{thm3.5}). 
In fact, we will prove the following result:
\begin{theorem} Let $\mathbb{G}$ be a homogeneous group with the homogeneous dimension  $Q$ equipped with a  quasi norm $|\cdot|.$
Let $0<p\le q<\infty$ and $a,b\in \mathbb{R}.$ Then for any $\epsilon>0$ and  for any arbitrary positive function $f$ on the homogeneous Lie group $\mathbb{G},$ the following inequality
\begin{align}\label{theorem}
   \nonumber\Bigg(\int_\mathbb{G}\Bigg[\exp\Bigg(\epsilon|\mathbb{B}(0,|x|)|^{-\epsilon}\int_{\mathbb{B}(0,|x|)}&|\mathbb{B}(0,|y|)|^{\epsilon-1}\log\,f(y) dy\Bigg)\Bigg]^q |\mathbb{B}(0,|x|)|^a dx\Bigg)^\frac{1}{q}\\&\le C\Bigg(\int_\mathbb{G} f^p(x)|\mathbb{B}(0,|x|)|^b\, dx\Bigg)^\frac{1}{p}
\end{align}holds for a positive finite constant $C$ if and only if \begin{align}
    p\left(a+1\right)-q\left(b+1\right)=0.
\end{align}
Moreover, the best constant $C$ in \eqref{theorem} satisfies 
\begin{align}
  \left(\frac{p}{q}\right)^\frac{1}{q} \epsilon^{\frac{1}{p}-\frac{1}{q}}\, \exp\bigg({\frac{b+1}{\epsilon p}-\frac{1}{p}}\bigg) \le C \le\left(\frac{p}{q}\right)^\frac{1}{q}\epsilon^{\frac{1}{p}-\frac{1}{q}}\, \exp\bigg({\frac{b+1}{\epsilon p}}\bigg).
\end{align}
\end{theorem}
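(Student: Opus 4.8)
The plan is to reduce inequality \eqref{theorem} to the already-established Theorem \ref{Th2int} by a radial change of variables that absorbs both the parameter $\epsilon$ and the inner weight $|\mathbb{B}(0,|y|)|^{\epsilon-1}$. The starting point is the homogeneity identity $|\mathbb{B}(0,r)|=\omega r^{Q}$, where $\omega:=|\mathbb{B}(0,1)|$ and $|\mathfrak{S}|=Q\omega$; a direct polar computation then gives $\epsilon|\mathbb{B}(0,|x|)|^{-\epsilon}\int_{\mathbb{B}(0,|x|)}|\mathbb{B}(0,|y|)|^{\epsilon-1}\,dy=1$, so the exponent in \eqref{theorem} is a genuine weighted mean of $\log f$. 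Writing $y=r\sigma$ in polar coordinates and substituting $\tau=r^{\epsilon}$ turns $r^{Q\epsilon-1}\,dr$ into $\epsilon^{-1}\tau^{Q-1}\,d\tau$; hence, defining $g$ by $g(\tau\sigma)=f(\tau^{1/\epsilon}\sigma)$, I would show
\[
\epsilon|\mathbb{B}(0,|x|)|^{-\epsilon}\int_{\mathbb{B}(0,|x|)}|\mathbb{B}(0,|y|)|^{\epsilon-1}\log f(y)\,dy
=\frac{1}{|\mathbb{B}(0,|x|^{\epsilon})|}\int_{\mathbb{B}(0,|x|^{\epsilon})}\log g(z)\,dz .
\]
Performing the same substitution $\tau=|x|^{\epsilon}$ in the outer integrals converts \eqref{theorem} into an inequality of exactly the form \eqref{mainin} for $g$, with the radial power weights $\tilde u(X)=\frac{\omega^{a}}{\epsilon}|X|^{Q((a+1)/\epsilon-1)}$ and $\tilde v(X)=\frac{\omega^{b}}{\epsilon}|X|^{Q((b+1)/\epsilon-1)}$.

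For the necessity of $p(a+1)-q(b+1)=0$, I would argue by dilation invariance. Replacing $f$ by $f\circ\delta_{\lambda}$ and changing variables, one checks that the left-hand side of \eqref{theorem} scales like $\lambda^{-Q(a+1)/q}$ and the right-hand side like $\lambda^{-Q(b+1)/p}$. If the inequality holds with a finite constant for every $\lambda>0$ and some admissible $f$, letting $\lambda\to0$ and $\lambda\to\infty$ forces the two exponents to coincide, i.e. $(a+1)/q=(b+1)/p$, which is the asserted balance condition.

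For sufficiency together with the upper bound, I would apply Theorem \ref{Th2int} to $g$ with the weights $\tilde u,\tilde v$ above; since $\tilde u$ is radial, $\tilde u_1=\tilde u$. The key elementary computation is $\frac{1}{|\mathbb{B}(0,R)|}\int_{\mathbb{B}(0,R)}\log|z|\,dz=\log R-\tfrac1Q$, which lets me evaluate $D_Q$ from \eqref{coeff33in} in closed form. The power of $R=|X|$ in $D_Q$ collects to $\frac1\epsilon\big(\frac{a+1}{q}-\frac{b+1}{p}\big)$, which vanishes precisely under the balance condition; thus $D_Q<\infty$ iff $p(a+1)=q(b+1)$, giving sufficiency. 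Under this condition the volume constant $\omega$ cancels, and one finds $D_Q=\epsilon^{1/p-1/q}\exp\!\big(\frac{b+1}{\epsilon p}-\frac1p\big)$; inserting this into the bound $C\le(\frac pq)^{1/q}e^{1/p}D_Q$ of \eqref{DQin}, the factor $e^{1/p}$ cancels the $e^{-1/p}$ in $D_Q$ and produces exactly $C\le(\frac pq)^{1/q}\epsilon^{1/p-1/q}\exp(\frac{b+1}{\epsilon p})$.

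The remaining and most delicate point is the lower bound for the best constant, which I would obtain by inserting an explicit test function. Taking $f(x)=|\mathbb{B}(0,|x|)|^{-(b+1-\epsilon)/p}$ cut off to a large quasi-ball $\mathbb{B}(0,T)$ (with $f$ set negligibly small outside), a direct evaluation—using again the logarithmic-mean identity above—shows that, thanks to the balance condition, the resulting ratio of the two sides is independent of $T$ and equals $(\frac pq)^{1/q}\epsilon^{1/p-1/q}\exp(\frac{b+1}{\epsilon p}-\frac1p)$, the claimed lower bound. The main obstacle here is that the exponential mean operator is nonlocal—truncating $f$ alters the inner integral over every larger ball—so the negligible-tail limit must be justified carefully; moreover the chosen exponent is a deliberately clean (non-optimal) one, so I must verify both that it is admissible (the relevant radial integrals converge) and that it reproduces the stated constant rather than the messier value coming from a fully optimized power.
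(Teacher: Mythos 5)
Your proposal is correct and follows the paper's own strategy almost step for step: the radial substitution $r\mapsto r^{\epsilon}$ reducing \eqref{Thm2} to Theorem \ref{Th2} with the power weights $|\mathbb{B}(0,|w|)|^{\frac{a+1}{\epsilon}-1}$ and $|\mathbb{B}(0,|w|)|^{\frac{b+1}{\epsilon}-1}$, the evaluation of $D_Q$ via the logarithmic mean $\frac{1}{|\mathbb{B}(0,R)|}\int_{\mathbb{B}(0,R)}\log|z|\,dz=\log R-\frac1Q$, and the lower bound obtained from the test function $|\mathbb{B}(0,|z|)|^{\frac1p\left(1-\frac{b+1}{\epsilon}\right)}$ restricted to a quasi-ball are all exactly what the paper does (your $f(x)=|\mathbb{B}(0,|x|)|^{-(b+1-\epsilon)/p}$ is precisely the pullback of that test function under the substitution). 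The one genuine divergence is the necessity of $p(a+1)=q(b+1)$: you obtain it by dilation invariance, whereas the paper extracts it from the same ball-restricted test-function computation that yields the lower bound --- there the two sides scale as $|\mathbb{B}(0,|x|)|^{1/p_0}$ and $|\mathbb{B}(0,|x|)|^{1/p}$ with $p_0$ as in \eqref{COEFF11}, and letting $|x|\to 0$ and $|x|\to\infty$ forces $p_0=p$. Your scaling argument is shorter and more conceptual, but it needs one more line than you give it: one must exhibit a single admissible $f$ with $0<\mathrm{LHS}(f)<\infty$ and $\mathrm{RHS}(f)<\infty$ (for instance $f$ equal to suitable powers of $|x|$ near the origin and near infinity; since $\epsilon>0$, $\log f$ is then integrable against $|\mathbb{B}(0,|y|)|^{\epsilon-1}$ on every ball), as otherwise the scaling relation carries no information. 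Finally, the nonlocality worry you raise in the last step is resolved in the paper by first restricting the outer integral to the same quasi-ball on which the test function is supported, so that the inner means never see the cut-off region; that is the clean way to finish your lower-bound computation without any limiting argument in $T$.
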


For the proof, we follow the method developed in \cite{CPP, JPW} in the (isotropic and abelian) setting of  Euclidean spaces. We note that also in the abelian (both isotropic and anisotropic) cases of $\mathbb{R}^n$, our results provide new insights in view of the arbitrariness of the quasi-norm $|\cdot|$ which does not necessarily have to be the Euclidean norm.

Apart from Section \ref{Intro}, this manuscript is divided in two sections. In the next section, we will recall the basics of homogeneous Lie groups and some other useful concepts. The last section is devoted to presenting proofs of the main results of this paper.

Throughout this paper, the symbol $A\asymp B$ means $\exists\,C_{1},C_{2}>0$ such that $C_{1}A\leq B\leq C_{2}A$.

\section{Preliminaries: Basics on homogeneous Lie groups} \label{preli}

In this section, we recall the basics of homogeneous groups. For more details on homogeneous groups as well as several functional inequalities on homogeneous groups, we refer to  monographs \cite{FR, FS,RS} and references therein.

A Lie group $\mathbb{G}$ (identified with $(\mathbb{R}^N, \circ)$) is called a homogeneous group if it is equipped  with the dilation mapping $$ D_\lambda:\mathbb{R}^N \rightarrow \mathbb{R}^N,\quad\lambda >0,$$ defined as 
\begin{equation}
 D_\lambda(x)= (\lambda^{v_1}x_1,\lambda^{v_2}x_2,\ldots,\lambda^{v_N}x_N) ,\quad v_1,v_2,\dots,v_N > 0,
\end{equation} 
which is an automorphism of the group $\mathbb{G}$ for each $\lambda >0 $. At times, we will denote the image of $x\in \mathbb{G}$ under $D_\lambda$ by $\lambda(x)$ or, simply $\lambda x$.
The homogeneous dimension $Q$ of the  homogeneous group $\mathbb{G}$ is defined by 
$$Q=v_1+v_2+\dots+v_N.$$
It is well known that a homogeneous group is necessarily nilpotent and unimodular. The Haar measure $dx$ on $\mathbb{G}$ is nothing but the Lebesgue measure on $\mathbb{R}^N$. 

Let us denote  the volume of a measurable set $\omega \subset \mathbb{G}$ by $|\omega|$. Then we have the following consequences: for $\lambda >0$
\begin{equation}
    |D_\lambda(\omega)|=\lambda^Q |\omega|  \quad \text{and} \quad \int_{\mathbb{G}} f(\lambda x) dx = \lambda^{-Q}\int_{\mathbb{G}} f(x) dx.
\end{equation}
A quasi-norm on $\mathbb{G}$ is any continuous non-negative function $ |\cdot|:\mathbb{G} \rightarrow [0,\infty)$ satisfying the following conditions:
\begin{itemize}
    \item[(i)] $|x|=|x^{-1}|$ for all $x \in \mathbb{G},$
    \item[(ii)] $|\lambda x|=\lambda |x|$\, for all \, $x \in \mathbb{G}$ and $\lambda >0,$
    \item[(iii)] $|x|=0 \iff x=0.$
\end{itemize}

If $\mathfrak{S}= \{x \in \mathbb{G}: |x|=1\} \subset \mathbb{G}$ is the unit sphere with respect to the quasi-norm $|\cdot|$, then there is a unique Radon measure $\sigma$ on $\mathfrak{S}$ such that for all $f \in L^1(\mathbb{G})$, we have the following polar decomposition
\begin{equation} \label{polar}
    \int_{\mathbb{G}} f(x) dx =\int_0^\infty \int_\mathfrak{S} f(ry) r^{Q-1}d\sigma(y) dr.
\end{equation}

Firstly, let us  consider metric spaces $\mathbb X$ with a Borel measure $dx$ allowing for the following {\em polar decomposition} at $a\in{\mathbb Y}$: we assume that there is a locally integrable function $\lambda \in L^1_{loc}$  such that for all $f\in L^1(\mathbb X)$ we have
   \begin{equation}\label{EQ:polarintro}
   \int_{\mathbb Y}f(x)dx= \int_0^{\infty}\int_{\Sigma_r} f(r,\omega) \lambda(r,\omega) d\omega_{r} dr,
   \end{equation}
    for the set $\Sigma_r=\{x\in\mathbb{Y}:d(x,a)=r\}\subset \mathbb Y$ with a measure on it denoted by $d\omega_r$, and $(r,\omega)\rightarrow a $ as $r\rightarrow0$.
    
The examples of such metric measure spaces are the Euclidean space, homogeneous Lie groups, hyperbolic spaces and, more generally, Cartan-Hadamard manifolds. 

 Here we fix some notation which be used in the sequel. The letters $u$ and $v$ will be always used to denote the weights on homogeneous groups $\mathbb{G}.$ A quasi-ball in the homogeneous group $\mathbb{G}$ with radius $|x|,$ \, $x \in \mathbb{G},$ and centred at the origin will be denoted by  $\mathbb{B}(0,|x|)$. We denote the Haar measure of the unit sphere $\mathfrak{S}$ in $\mathbb{G}$   by $|\mathfrak{S}|.$ The Haar measure of the unit quasi-ball $\mathbb{B}(0,|x|),$ denoted by $|\mathbb{B}(0,|x|)|,$ can be calculated by using \eqref{polar} as
\begin{align}\label{measure}
 \nonumber|\mathbb{B}(0,|x|)|&=\int_{\mathbb{B}(0,|x|)} dy=\int_0^{|x|} r^{Q-1}\left(\int_\mathfrak{S}   d\sigma\right)\,dr \\&=\int_\mathfrak{S}\left(\int_0^{|x|} r^{Q-1} dr\right) d\sigma=\frac{|x|^Q|\mathfrak{S}|}{Q}.
\end{align}

For a given function $u$ on $\mathbb{G},$  the spherical average $u_1$  of $u$ is defined by 
\begin{align}
u_1(x):=\frac{1}{|\mathfrak{S}|} \int_{\mathfrak{S}} u(|x|\sigma)\, d\sigma,
\end{align} where $\mathfrak{S}= \{x \in \mathbb{G}: |x|=1\} \subset \mathbb{G}$ is the unit sphere with respect to the quasi-norm $|\cdot|$.

\section{Main Results}
In this section, we prove  the weighted Levin-Cochran-Lee type inequalities on a homogeneous Lie group equipped with a quasi-norm for arbitrary weights. We will derive sharp weighted inequalities on quasi-balls in homogeneous (Lie) groups involving specific weights and also calculate the sharp constant for these inequalities. 

We begin this section with the following Levin-Cochran-Lee inequality on homogeneous groups involving general weights.
\begin{theorem}\label{Th2}
Let $\mathbb{G}$ be a homogeneous group with the homogeneous dimension  $Q$ equipped with a quasi-norm $|\cdot|,$ and 
let $0<p\leq q<\infty.$  Suppose that $u$ and $v$ are two non-negative functions on $\mathbb{G}.$ Then, there exists a positive constant $C$ such that, for all positive functions $f$ on $\mathbb{G}$, the following inequality holds 
\begin{equation} \label{main}
    \left(\int_{\mathbb{G}} \left[ \exp \left( \frac{1}{|\mathbb{B}(0, |x|)|} \int_{\mathbb{B}(0, |x|)} \log f(y)\, dy \right) \right]^q u(x)\, dx \right) ^\frac{1}{q}\leq C \left(\int_{\mathbb{G}} f^p(x)\, v(x)\,dx \right)^{\frac{1}{p}},
\end{equation}
provided that 
\begin{equation}\label{coeff33}
    D_Q:= \sup_{x \in \mathbb{G}} |\mathbb{B}(0, |x|)|^{\frac{1}{q}-\frac{1}{p}} u_1^{\frac{1}{q}}(x) \left[ \exp \left( \frac{1}{|\mathbb{B}(0, |x|)|} \int_{\mathbb{B}(0, |x|)} \log \frac{1}{v(y)}\,dy \right)\right]^{\frac{1}{p}}<\infty.
\end{equation} Here $u_1$ is the spherical average of $u,$ given by
\begin{align}\label{U1}
u_1(x):=\frac{1}{|\mathfrak{S}|} \int_{\mathfrak{S}} u(|x|\sigma)\, d\sigma,
\end{align} where $\mathfrak{S}= \{x \in \mathbb{G}: |x|=1\} \subset \mathbb{G}$ is the unit sphere with respect to the quasi-norm $|\cdot|$.

Moreover, the optimal constant $C$ in \eqref{main} can be estimated as follows:
\begin{equation}\label{DQ}
    0< C \leq \left(\frac{p}{q} \right)^{\frac{1}{q}} \exp\bigg({\frac{1}{p}}\bigg) D_Q.
\end{equation}
\end{theorem}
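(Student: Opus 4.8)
The plan is to realize the geometric mean as a limit of power means and to invoke the two–weight integral Hardy inequality of Theorem \ref{TH41} at a subcritical exponent, sending that exponent to its critical value only at the end. For $\beta\in(0,p)$ set
\[
M_\beta f(x):=\left(\frac{1}{|\mathbb{B}(0,|x|)|}\int_{\mathbb{B}(0,|x|)}f^\beta(y)\,dy\right)^{1/\beta}.
\]
Since $\log$ is concave, Jensen's inequality for the normalized measure $|\mathbb{B}(0,|x|)|^{-1}dy$ on the quasi-ball gives $M_\beta f(x)\ge\exp\big(\frac{1}{|\mathbb{B}(0,|x|)|}\int_{\mathbb{B}(0,|x|)}\log f\,dy\big)$ for every $\beta>0$. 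Hence it suffices to bound the left-hand side of \eqref{main} with the geometric mean replaced by $M_\beta f$: the resulting estimate then holds \emph{a fortiori} for the geometric mean, for every such $\beta$, and I may afterwards optimize in $\beta$.

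Next I would linearize and absorb the normalizing factor. Put $h:=f^\beta$, $P:=p/\beta$ and $\widetilde q:=q/\beta$; then $\beta P=p$, $\beta\widetilde q=q$, and $1<P\le\widetilde q$, so Theorem \ref{TH41} applies with these exponents. Moreover
\[
\int_{\mathbb{G}}[M_\beta f(x)]^q\,u(x)\,dx=\int_{\mathbb{G}}\left(\int_{\mathbb{B}(0,|x|)}h(y)\,dy\right)^{\widetilde q}|\mathbb{B}(0,|x|)|^{-\widetilde q}u(x)\,dx,
\]
which is exactly a Hardy inequality with left weight $U(x):=|\mathbb{B}(0,|x|)|^{-\widetilde q}u(x)$ and right weight $V:=v$, the latter because $\int h^P V=\int f^p v$. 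Because the averaging operator $h\mapsto\int_{\mathbb{B}(0,|x|)}h\,dy$ depends on $x$ only through $|x|$, the polar decomposition \eqref{polar} turns the outer integral of the corresponding gauge into $|\mathfrak{S}|\int_{|x|}^{\infty}|\mathbb{B}(0,r)|^{-\widetilde q}u_1(r)\,r^{Q-1}dr$, which is precisely how the spherical average $u_1$ of \eqref{coeff33} enters in place of $u$. Applying Theorem \ref{TH41} and raising the resulting $(P,\widetilde q)$-inequality — whose two sides carry the powers $1/\widetilde q=\beta/q$ and $1/P=\beta/p$ — to the power $1/\beta$ to restore the exponents $1/q$ and $1/p$, I obtain
\[
\left(\int_{\mathbb{G}}[M_\beta f]^q u\,dx\right)^{1/q}\le\left[\mathbf{A}_Q^{(\beta)}\left(\tfrac{P}{P-1}\right)^{\frac{P-1}{P}}P^{1/\widetilde q}\right]^{1/\beta}\left(\int_{\mathbb{G}}f^p v\,dx\right)^{1/p},
\]
where $\mathbf{A}_Q^{(\beta)}$ is the Muckenhoupt-type gauge of Theorem \ref{TH41} built from $U$, $V$, $P$, $\widetilde q$.

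The heart of the argument is the asymptotic analysis as $\beta\to0^+$, i.e. $P,\widetilde q\to\infty$, where three mechanisms conspire to produce exactly the constant in \eqref{DQ}. First, the Hardy constant contributes $\big[(\tfrac{P}{P-1})^{(P-1)/P}\big]^{1/\beta}\to e^{1/p}$, since $\tfrac{P-1}{P}\log\tfrac{P}{P-1}=\tfrac1P+O(P^{-2})$ and $\tfrac1\beta\cdot\tfrac1P=\tfrac1p$; this is the source of $e^{1/p}$. Second, the factor $\big[P^{1/\widetilde q}\big]^{1/\beta}=P^{1/q}$ diverges, but is cancelled by the decay of the outer part of $\mathbf{A}_Q^{(\beta)}$: because $|\mathbb{B}(0,|y|)|^{-\widetilde q}$ concentrates at $|y|=|x|$, a boundary (Laplace-type) estimate gives $\int_{\mathbb{G}\setminus\mathbb{B}(0,|x|)}|\mathbb{B}(0,|y|)|^{-\widetilde q}u_1(y)\,dy\asymp(\widetilde q-1)^{-1}|\mathbb{B}(0,|x|)|^{1-\widetilde q}u_1(x)$, and $P^{1/q}(\widetilde q-1)^{-1/q}\to(p/q)^{1/q}$. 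Third, expanding $v^{1/(1-P)}=\exp\!\big(-\tfrac{1}{P-1}\log v\big)$ to first order and taking the $1/\beta$ power converts the inner part of $\mathbf{A}_Q^{(\beta)}$ into $\sup_x|\mathbb{B}(0,|x|)|^{1/q-1/p}u_1^{1/q}(x)\big[\exp\big(\frac{1}{|\mathbb{B}(0,|x|)|}\int_{\mathbb{B}(0,|x|)}\log\frac1v\big)\big]^{1/p}=D_Q$. Passing to the limit (equivalently, taking the infimum over $\beta\in(0,p)$) then yields $C\le(p/q)^{1/q}e^{1/p}D_Q$, which is \eqref{DQ}.

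I expect the main obstacle to be the uniform control of the gauge $\mathbf{A}_Q^{(\beta)}$ as $\beta\to0^+$: making rigorous the boundary-concentration estimate for $\int_{\mathbb{G}\setminus\mathbb{B}(0,|x|)}|\mathbb{B}(0,|y|)|^{-\widetilde q}u_1\,dy$ (rather than heuristic, using \eqref{measure} and an integration-by-parts argument), and controlling the first-order expansion of $\int_{\mathbb{B}(0,|x|)}v^{1/(1-P)}\,dy$ uniformly in $x$ so that the supremum converges to $D_Q$ after the $1/\beta$ power. Since only the upper estimate \eqref{DQ} is required, it should suffice to replace these exact asymptotics by one-sided bounds whose $\beta\to0$ limits do not exceed the target constant; and the hypothesis $D_Q<\infty$ from \eqref{coeff33} guarantees $\mathbf{A}_Q^{(\beta)}<\infty$ for all small $\beta$, so that Theorem \ref{TH41} is genuinely applicable at each stage.
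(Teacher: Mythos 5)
Your proposal is correct and follows essentially the same route as the paper: apply Theorem \ref{TH41} with exponents $p/\beta$, $q/\beta$ and modified weight $|\mathbb{B}(0,|x|)|^{-q/\beta}u$, then let $\beta\to 0^+$ so that the Hardy constant gives $e^{1/p}$, the outer integral concentrates at the boundary of the quasi-ball to produce $(p/q)^{1/q}u_1^{1/q}$, and the inner power mean of $v^{1/(1-P)}$ tends to the exponential of the logarithmic mean of $1/v$, yielding $D_Q$. The only (minor, and welcome) deviation is your use of Jensen's inequality to dominate the geometric mean by $M_\beta f$ pointwise for each $\beta$, which sidesteps the limit interchange the paper performs at the very end when passing from the power-mean inequality to \eqref{main}.
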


\begin{proof}
 We  begin with the proof by rewriting Theorem \ref{TH41} by replacing $\frac{p}{\alpha},\,\frac{q}{\alpha},\,u(x)|\mathbb{B}(0,|x|)|^\frac{-q}{\alpha}$ and $f^\alpha$ in the places of $p,\,q,\,u(x)$ and $f,$ respectively, where $0<\alpha<p$. Indeed, we get the following  inequality
 \begin{align*}
   \left(\int_\mathbb{G}\left(\int_{\mathbb{B}(0,|x|)}f^\alpha (y)\,dy\right)^\frac{q}{\alpha}u(x)|\mathbb{B}(0,|x|)|^\frac{-q}{\alpha}\, dx\right)^\frac{\alpha}{q} \le C_\alpha\left(\int_\mathbb{G}f^{\alpha\frac{p}{\alpha}}(x)\,v(x)\,dx\right)^\frac{\alpha}{p}, 
 \end{align*}
which in turn implies that
\begin{align}\label{limit121}
  \left(\int_\mathbb{G}\left(\frac{1}{|\mathbb{B}(0,|x|)|}\int_{\mathbb{B}(0,|x|)}f^\alpha (y)\,dy\right)^\frac{q}{\alpha}u(x)\, dx\right)^\frac{1}{q} \le C_\alpha^{\frac{1}{\alpha}}\left(\int_\mathbb{G}f^p(x)\,v(x)\,dx\right)^\frac{1}{p},    
 \end{align}holds for all non-negative functions $f\in \mathbb{G}$ if 
 \begin{align*}
  \textbf{A}_{Q,\alpha}=\sup_{x\in\mathbb{G}}\left(\int_{\mathbb{G}\backslash\mathbb{B}(0,|x|)}u(y)\,|\mathbb{B}(0,|y|)|^{-\frac{q}{\alpha}}dy\right)^\frac{1}{q} \left(\int_{\mathbb{B}(0,|x|)}v^{\frac{\alpha}{\alpha-p}}(y)\,dy\right)^\frac{p-\alpha}{\alpha p}<\infty ,
 \end{align*}
 and the constant $C_\alpha$ satisfies the following estimate:
 \begin{align}\label{AQ1}
  C_\alpha^{\frac{1}{\alpha}}\le \textbf{A}_{Q,\alpha}\left(\frac{p}{p-\alpha}\right)^\frac{p-\alpha}{\alpha p}.\left(\frac{p}{\alpha}\right)^\frac{1}{q}.
 \end{align} 
  We note that
 \begin{align*}
  \textbf{A}_{Q,\alpha}\left(\frac{p}{p-\alpha}\right)^\frac{p-\alpha}{\alpha p}.\left(\frac{p}{\alpha}\right)^\frac{1}{q}=A_{Q,\alpha}\left(\frac{p}{p-\alpha}\right)^\frac{p-\alpha}{\alpha p} \times p^\frac{1}{q},  
\end{align*}
   with 
 \begin{align}\label{coff}
  A_{Q,\alpha}=\sup_{x\in \mathbb{G}}\left(\frac{1}{\alpha}\right)^\frac{1}{q}\times
  \left(\int_{\mathbb{G}\backslash\mathbb{B}(0,|x|)}u(y)|\mathbb{B}(0,|x|)|^\frac{-q}{\alpha}\,dy\right)^\frac{1}{q}\left(\int_{\mathbb{B}(0,|x|)}v^\frac{\alpha}{\alpha-p}(y)\,dy\right)^\frac{p-\alpha}{\alpha p}. 
   \end{align}

 Therefore, from \eqref{AQ1}, we have
 \begin{align}\label{AQ22}
  C_\alpha^{\frac{1}{\alpha}}\le A_{Q,\alpha}\left(\frac{p}{p-\alpha}\right)^\frac{p-\alpha}{\alpha p}\times p^\frac{1}{q}.  
 \end{align}
 
   Since
 \begin{align}\label{AQ2}
 \lim_{\alpha\rightarrow 0^+} \left(\frac{p}{p-\alpha}\right)^\frac{p-\alpha}{\alpha p}=e^\frac{1}{p}, 
 \end{align}
using \eqref{AQ2} in \eqref{AQ22}, we get
 \begin{align}\label{coeff6}
   C \le A_{Q} \,p^\frac{1}{q}\,e^\frac{1}{p} , 
 \end{align} where
 $$A_Q:=\lim_{\alpha\rightarrow 0^+} A_{Q,\alpha}\quad\quad \text{and} \quad\quad C:=\lim_{\alpha\rightarrow0^+}C_\alpha^{\frac{1}{\alpha}}.$$
 Recall that by  \eqref{measure} we have,
 \begin{align*}
 |\mathbb{B}(0,|y|)|:=\int_{\mathbb{B}(0,|y|)} dx=\int_\mathfrak{S}\left(\int_0^{|y|} r^{Q-1} dr\right) d\sigma=\frac{|y|^Q|\mathfrak{S}|}{Q}.     
 \end{align*}

 Now,  let us calculate the first integral from \eqref{coff}. We get
 \begin{align}\label{coeff1}
&\left(\int_{\mathbb{G}\backslash \mathbb{B}(0,|x|)}u(y)\,|\mathbb{B}(0,|y|)|^{\frac{-q}{\alpha}} dy \right)^\frac{1}{q}=\nonumber\left(\int_{\mathbb{G}\backslash\mathbb{B}(0,|x|)}u(y)\left(\frac{|y|^Q}{Q}|\mathfrak{S}|\right)^\frac{- q}{\alpha}dy\right)^\frac{1}{q}\\&=\nonumber\left(\left(\frac{|\mathfrak{S}|}{Q}\right)^\frac{-q}{\alpha}\int_{\mathbb{G}\backslash\mathbb{B}(0,|x|)}u(y)\frac{1}{|y|^\frac{Qq}{\alpha}} dy\right)^\frac{1}{q}\\&=\nonumber\left(\left(\frac{|\mathfrak{S}|}{Q}\right)^\frac{-q}{\alpha}\int_{\mathbb{G}\backslash\mathbb{B}(0,|x|)}u(y)\left(\frac{|x|}{|y|}\right)^\frac{Qq}{\alpha}|x|^\frac{-Qq}{\alpha} dy\right)^\frac{1}{q}\\&\nonumber =\left(\left(\frac{|\mathfrak{S}|}{Q}\right)^{\frac{-q}{\alpha}}\int_{\mathbb{G}\backslash \mathbb{B}(0,|x|)}u(y)\left(\frac{|x|}{|y|}\right)^{\frac{Qq}{\alpha}}|x|^Q\,|x|^{-Q}\,|x|^{\frac{-Qq}{\alpha}}\,dy\right)^{\frac{1}{q}}\\&=\left(\frac{|\mathfrak{S}|}{Q}\right)^\frac{-1}{\alpha}|x|^\frac{-Q}{\alpha}\,|x|^\frac{Q}{q}\left(\int_{\mathbb{G}\backslash\mathbb{B}(0,|x|)}u(y)\left(\frac{|x|}{|y|}\right)^\frac{Qq}{\alpha}|x|^{-Q} dy\right)^\frac{1}{q}.
\end{align}
Also, calculating the second integral of \eqref{coff}, we get that
\begin{align}\label{coeff2}
  &\left(\int_{\mathbb{B}(0,|x|)}v^\frac{\alpha}{\alpha-p}(y)\,dy\right)^{\frac{p-\alpha}{\alpha p}}=\nonumber\left(\frac{|\mathbb{B}(0,|x|)|}{|\mathbb{B}(0,|x|)|}\int_{\mathbb{B}(0,|x|)} v^{\frac{\alpha}{\alpha-p}}(y)\, dy\right)^{\frac{p-\alpha}{\alpha p}} \\&=\nonumber |\mathbb{B}(0,|x|)|^{\frac{p-\alpha}{\alpha p}}\left(\frac{1}{|\mathbb{B}(0,|x|)|}\int_{\mathbb{B}(0,|x|)} v^{\frac{\alpha}{\alpha-p}}(y)\, dy\right)^{\frac{p-\alpha}{\alpha p}}\\&=\nonumber\left(\frac{|x|^Q}{Q}|\mathfrak{S}|\right)^{\frac{p-\alpha}{\alpha p}}\left(\frac{1}{|\mathbb{B}(0,|x|)|}\int_{\mathbb{B}(0,|x|)} v^{\frac{\alpha}{\alpha-p}}(y)\, dy\right)^{\frac{p-\alpha}{\alpha p}}\\&=\left(\frac{|\mathfrak{S}|}{Q}\right)^{\frac{p-\alpha}{\alpha p}}|x|^{Q\left(\frac{p-\alpha}{\alpha p}\right)}\left(\frac{1}{|\mathbb{B}(0,|x|)|}\int_{\mathbb{B}(0,|x|)} v^{\frac{\alpha}{\alpha-p}}(y)\, dy\right)^{\frac{p-\alpha}{\alpha p}}.
\end{align}
Next, substituting the values from \eqref{coeff1} and \eqref{coeff2} in \eqref{coff}, we obtain
\begin{align*}
 &A_{Q,\alpha}=\sup_{x\in \mathbb{G}} \left(\frac{1}{\alpha}\right)^\frac{1}{q}\,\left(\frac{|\mathfrak{S}|}{Q}\right)^\frac{-1}{\alpha} \left(\frac{|\mathfrak{S}|}{Q}\right)^{\frac{p-\alpha}{\alpha p}}|x|^\frac{-Q}{\alpha}|x|^\frac{Q}{q}|x|^{Q\left(\frac{p-\alpha}{\alpha p}\right)}\times\\&\left(\int_{\mathbb{G}\backslash\mathbb{B}(0,|x|)}u(y)\left(\frac{|x|}{|y|}\right)^\frac{Qq}{\alpha}|x|^{-Q} dy\right)^\frac{1}{q}
\left(\frac{1}{|\mathbb{B}(0,|x|)|}\int_{\mathbb{B}(0,|x|)} v^{\frac{\alpha}{\alpha-p}}(y)\, dy\right)^{\frac{p-\alpha}{\alpha p}}.
\end{align*} Thus we find,
\begin{align}\label{coeff4}
   &A_{Q,\alpha}=\sup_{x\in\mathbb{G}}\left(\frac{1}{\alpha}\right)^\frac{1}{q}|x|^{\frac{Q}{q}-\frac{Q}{p}}\left(\frac{|\mathfrak{S}|}{Q}\right)^\frac{-1}{p}\times\\&\nonumber\left(\int_{\mathbb{G}\backslash\mathbb{B}(0,|x|)}u(y)\left(\frac{|x|}{|y|}\right)^\frac{Qq}{\alpha}|x|^{-Q} dy\right)^\frac{1}{q}
\left(\frac{1}{|\mathbb{B}(0,|x|)|}\int_{\mathbb{B}(0,|x|)} v^{\frac{\alpha}{\alpha-p}}(y)\, dy\right)^{\frac{p-\alpha}{\alpha p}}.
\end{align}
Performing the variable transformation $ y=|x|z$ in the first integral of \eqref{coeff4} and then using  the polar decomposition  by setting $z=r\omega,$ we have
\begin{align}\label{Variable}
   \nonumber I_\alpha(x)=\frac{1}{\alpha}\int_{\mathbb{G}\backslash \mathbb{B}(0,|x|)}u(y)\left(\frac{|x|}{|y|}\right)^\frac{Qq}{\alpha}|x|^{-Q} dy&=\frac{1}{\alpha}\int_{\mathbb{G}\backslash \mathbb{B}(0, 1)} u(|x|z)\,|z|^{\frac{-Qq}{\alpha}}dz
\\&=\frac{1}{\alpha}\int_{\mathfrak{S}}\int_1^\infty u\left(|x|r\omega\right)r^{Q-\frac{Qq}{\alpha}-1}dr\,d\omega.
\end{align}
We observe that \begin{align*}
  \chi _{(1,\infty)}(r)\, Q\left(\frac{q}{\alpha}-1\right)\,r^{Q-\frac{Qq}{\alpha}-1}\longrightarrow \delta_1(r)\quad\text{as}\quad \alpha\to{0}^+, 
\end{align*}
where $\delta_1(r)$ is the Dirac delta function at $r=1.$   Indeed, by choosing a test function $\phi \in C^\infty_c(\mathbb{R})$ we see that $\lim_{\alpha \rightarrow0^+}\langle \chi _{(1,\infty)}(r)\, Q\left(\frac{q}{\alpha}-1\right)\,r^{Q-\frac{Qq}{\alpha}-1}, \phi \rangle= \lim_{\alpha \rightarrow0^+} \int_{1}^\infty Q\left(\frac{q}{\alpha}-1\right)\,r^{Q-\frac{Qq}{\alpha}-1} \phi(r)\, dr= -\lim_{\alpha \rightarrow0^+} ( r^{Q-\frac{Qq}{\alpha}} \phi(r)|_{r=1}^{\infty})+ \lim_{\alpha \rightarrow0^+} \int_{1}^\infty r^{Q-\frac{Qq}{\alpha}} \phi'(r) dr = \phi(1)=\langle \delta_1, \phi \rangle.$ This implies that 
$$\lim_{\alpha \rightarrow0^+}  \chi _{(1,\infty)}(r)\, Q\left(\frac{q}{\alpha}-1\right)\,r^{Q-\frac{Qq}{\alpha}-1}= \delta_1(r).$$

Indeed, for  $r \in (1, \infty)$ and for $\alpha\rightarrow 0^+,$ we have $\lim_{\alpha \rightarrow0^+} r^{Q-\frac{Qq}{\alpha}}=0,$
 which shows that
 $$\lim_{\alpha \rightarrow0^+}  \chi _{(1,\infty)}(r)\, \frac{1}{\alpha}\,r^{Q-\frac{Qq}{\alpha}-1}= \frac{\delta_1(r)}{Qq}.$$
Thus, from  \eqref{Variable}, we have
\begin{align}\label{coeff5}
I_\alpha(x)\rightarrow  \frac{1}{Qq} \int_\mathfrak{S} u\left(|x|\omega\right)\,d\omega=\frac{1}{Qq}\,|\mathfrak{S}|\,u_1(x)\quad \text{as}\quad \alpha\to{0}^+.
\end{align} A simple calculation gives, as $\beta \to{0}^+,$ that
\begin{align}\label{limit}
    \left(\frac{1}{|\mathbb{B}(0,|x|)|}\int_{\mathbb{B}(0,|x|)}f^\beta(y)\,dy\right)^\frac{1}{\beta} \rightarrow \exp\left(\frac{1}{|\mathbb{B}(0,|x|)|}\int_{\mathbb{B}(0,|x|)}\log\,f(y)\, dy\right).
\end{align}
Next, using \eqref{limit} in the second integral of \eqref{coeff4}, we have
\begin{align}\label{limit11}
 \left(\frac{1}{|\mathbb{B}(0,|x|)|}\int_{\mathbb{B}(0,|x|)} v^{\frac{\alpha}{\alpha-p}}(y)\, dy\right)^{\frac{p-\alpha}{\alpha p}}\longrightarrow\exp\left(\frac{1}{|\mathbb{B}(0,|x|)|}\int_{\mathbb{B}(0,|x|)}\log\frac{1}{v(y)}\,dy\right)^\frac{1}{p}\quad\text{as}\quad\alpha\to{0}^+.
\end{align}
Substituting \eqref{coeff5} and \eqref{limit11} in \eqref{coeff4} as $\alpha \to{0}^+$, we have
\begin{align}\label{AQ}
\nonumber &A_Q=\lim_{\alpha\to{0}^+} A_{Q,\alpha}\\&\nonumber=\nonumber\sup_{x\in \mathbb{G}}|x|^{\frac{Q}{q}-\frac{Q}{p}}\left(\frac{|\mathfrak{S}|}{Q}\right)^{\frac{-1}{p}} \left(\frac{1}{Qq}|\mathfrak{S}|u_1(x)\right)^\frac{1}{q}\left(\exp\left(\frac{1}{|\mathbb{B}(0,|x|)|}\int_{\mathbb{B}(0,|x|)}\log\frac{1}{v(y)}dy\right)\right)^\frac{1}{p}\\&=\nonumber\sup_{x\in \mathbb{G}}q^\frac{-1}{q}|x|^{\frac{Q}{q}-\frac{Q}{p}}\left(\frac{|\mathfrak{S}|}{Q}\right)^{\frac{1}{q}-\frac{1}{p}} u^\frac{1}{q}_1(x)\left(\exp\left(\frac{1}{|\mathbb{B}(0,|x|)|}\int_{\mathbb{B}(0,|x|)}\log\frac{1}{v(y)}dy\right)\right)^\frac{1}{p}\\&=\nonumber\sup_{x\in \mathbb{G}} q^{-\frac{1}{q}}\left(|x|^Q\frac{|\mathfrak{S}|}{Q}\right)^{\frac{1}{q}-\frac{1}{p}}\,u_1^\frac{1}{q}(x)\left(\exp\left(\frac{1}{|\mathbb{B}(0,|x|)|}\int_{\mathbb{B}(0,|x|)}\log\frac{1}{v(y)}dy\right)\right)^\frac{1}{p}\\&\quad\quad\quad\quad\quad=q^{-\frac{1}{q}}D_Q,
\end{align}
which is finite by the hypothesis of the theorem.

Thus, putting the value of $A_Q$ from \eqref{AQ} in \eqref{coeff6}, we get
\begin{align}
 0<C\le\left(\frac{p}{q}\right)^\frac{1}{q} \,\exp\bigg(\frac{1}{p}\bigg)\,D_Q,
\end{align} which is same as \eqref{DQ}.\\
Finally,  using \eqref{limit} in \eqref{limit121}, we obtain
\begin{align}
 \left(\int_{\mathbb{G}} \left[ \exp \left( \frac{1}{|\mathbb{B}(0, |x|)|} \int_{\mathbb{B}(0, |x|)} \log f(y)\, dy \right) \right]^q u(x)\, dx \right) ^\frac{1}{q}\leq C \left(\int_{\mathbb{G}} f^p(x)\, v(x)\,dx \right)^{\frac{1}{p}},  
\end{align}
 completing the proof of the theorem.
\end{proof}
In the next Theorem \ref{THM2}, we will prove the following stronger inequality for the special case $u(x)=|\mathbb{B}(0,|x|)|^a$ and $v(x)=|\mathbb{B}(0,|x|)|^b$:
\begin{theorem}\label{THM2} Let $\mathbb{G}$ be a homogeneous group with the homogeneous dimension  $Q$ equipped with a  quasi norm $|\cdot|.$
Let $0<p\le q<\infty$ and $a,b\in \mathbb{R}.$ Then for any $\epsilon>0$ and  for any arbitrary positive function $f$ on the homogeneous Lie group $\mathbb{G},$ the following inequality
\begin{align}\label{Thm2}
   \nonumber \Bigg(\int_\mathbb{G}\Bigg[\exp\Bigg(\epsilon|\mathbb{B}(0,|x|)|^{-\epsilon}\int_{\mathbb{B}(0,|x|)}&|\mathbb{B}(0,|y|)|^{\epsilon-1}\log\,f(y) dy\Bigg)\Bigg]^q |\mathbb{B}(0,|x|)|^a dx\Bigg)^\frac{1}{q}\\&\le C\Bigg(\int_\mathbb{G} f^p(x)|\mathbb{B}(0,|x|)|^b\, dx\Bigg)^\frac{1}{p}
\end{align}holds for a positive finite constant $C$ if and only if \begin{align}\label{coeff7}
    p\left(a+1\right)-q\left(b+1\right)=0.
\end{align}
Moreover, the best constant $C$ in \eqref{Thm2} satisfies 
\begin{align}
  \left(\frac{p}{q}\right)^\frac{1}{q} \epsilon^{\frac{1}{p}-\frac{1}{q}}\, \exp\bigg({\frac{b+1}{\epsilon p}-\frac{1}{p}}\bigg) \le C \le\left(\frac{p}{q}\right)^\frac{1}{q}\epsilon^{\frac{1}{p}-\frac{1}{q}}\, \exp\bigg({\frac{b+1}{\epsilon p}}\bigg).
\end{align}
\end{theorem}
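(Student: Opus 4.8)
The plan is to prove the three assertions separately: necessity of the balance condition \eqref{coeff7}, sufficiency together with the upper bound, and the matching lower bound. For necessity I would exploit the dilation structure. Replacing $f$ by $f\circ D_\lambda$ and then changing variables $x\mapsto D_\lambda x$ in both integrals of \eqref{Thm2}, one checks that the exponential mean operator appearing in \eqref{Thm2} commutes with dilations, so that the left-hand side of \eqref{Thm2} is homogeneous of degree $\lambda^{-Q(a+1)/q}$ and the right-hand side of degree $\lambda^{-Q(b+1)/p}$. Fixing any nontrivial $f$ with both sides finite and positive (a truncated power of $|\mathbb{B}(0,|x|)|$ will do) and letting $\lambda\to 0^+$ or $\lambda\to\infty$ then forces the two homogeneities to coincide, which is exactly $p(a+1)-q(b+1)=0$; if the exponent did not vanish, one side could be driven to $0$ while the other stays positive, contradicting the finiteness of $C$.

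For sufficiency and the upper bound I would reduce \eqref{Thm2} to Theorem \ref{Th2}. Writing everything in polar coordinates \eqref{polar} and using $|\mathbb{B}(0,|y|)|=\frac{|\mathfrak{S}|}{Q}|y|^Q$ from \eqref{measure}, the radial substitution $D_r(\omega)\mapsto D_{r^\epsilon}(\omega)$ (directions $\omega$ fixed) has the effect $r^{Q\epsilon-1}\,dr=\frac1\epsilon s^{Q-1}\,ds$ with $s=r^\epsilon$; this simultaneously turns the weighted mean $\epsilon|\mathbb{B}(0,|x|)|^{-\epsilon}\int_{\mathbb{B}(0,|x|)}|\mathbb{B}(0,|y|)|^{\epsilon-1}\log f\,dy$ into the plain mean $\frac{1}{|\mathbb{B}(0,|\xi|)|}\int_{\mathbb{B}(0,|\xi|)}\log g$ of the transported function $g$, and converts the weights $|\mathbb{B}|^a,|\mathbb{B}|^b$ into $|\mathbb{B}|^{a'},|\mathbb{B}|^{b'}$ with $a'=\frac{a+1}{\epsilon}-1$ and $b'=\frac{b+1}{\epsilon}-1$. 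I would then invoke Theorem \ref{Th2} for $g$ with $u=|\mathbb{B}|^{a'}$ and $v=|\mathbb{B}|^{b'}$. Computing \eqref{coeff33} for these radial power weights (using $\frac{1}{|\mathbb{B}(0,|\xi|)|}\int_{\mathbb{B}(0,|\xi|)}\log|\xi'|\,d\xi'=\log|\xi|-\frac1Q$) shows that the power of $|\xi|$ in $D_Q$ vanishes precisely when $p(1+a')=q(1+b')$, i.e. exactly under \eqref{coeff7}, and that then $D_Q=\exp(b'/p)=\exp(\frac{b+1}{\epsilon p}-\frac1p)$. Finally, tracking the Jacobian $\frac1\epsilon$ and the weight factors from the substitution, all powers of $\frac{|\mathfrak{S}|}{Q}$ cancel thanks to \eqref{coeff7}, leaving only the factor $\epsilon^{1/p-1/q}$; combining this with the estimate \eqref{DQ} of Theorem \ref{Th2} gives $C\le(p/q)^{1/q}\epsilon^{1/p-1/q}\exp(\frac{b+1}{\epsilon p})$.

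For the lower bound I would test \eqref{Thm2} against the truncated power $f(x)=|\mathbb{B}(0,|x|)|^{\gamma}$ on the quasi-ball $\{x:|\mathbb{B}(0,|x|)|\le S\}$ (extended by $0$), with the explicit exponent $\gamma=\frac{\epsilon-b-1}{p}$. The decisive computation is that power weights are fixed points of the operator up to a scalar: whenever the quasi-ball $\mathbb{B}(0,|x|)$ lies in the truncation region one has $\exp\big(\epsilon|\mathbb{B}(0,|x|)|^{-\epsilon}\int_{\mathbb{B}(0,|x|)}|\mathbb{B}(0,|y|)|^{\epsilon-1}\log f\,dy\big)=e^{-\gamma/\epsilon}|\mathbb{B}(0,|x|)|^{\gamma}$, which follows from the auxiliary integral $\int_0^{R}r^{Q\epsilon-1}\log r\,dr$. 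Using the radial reduction $\int_{\mathbb{G}}F(|\mathbb{B}(0,|x|)|)\,dx=\int_0^\infty F(t)\,dt$, both integrals become elementary; the dependence on the truncation level $S$ cancels exactly because of \eqref{coeff7}, and the resulting ratio equals $(p/q)^{1/q}\epsilon^{1/p-1/q}\exp(\frac{b+1}{\epsilon p}-\frac1p)$, the asserted lower bound. (The choice $\gamma=\frac{\epsilon-b-1}{p}$ is not the optimizer of the test ratio, but it is the value that yields this clean closed form.)

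The step I expect to be the main obstacle is the bookkeeping in the second paragraph: carrying the anisotropic dilation through the polar decomposition, verifying that the single substitution $r\mapsto r^\epsilon$ normalizes the inner mean and the outer weights at once, and checking that the accumulated powers of $|\mathfrak{S}|/Q$ and of $|\xi|$ cancel exactly under \eqref{coeff7}. Once the eigenfunction identity of the third paragraph and the normalization $\int_{\mathbb{G}}F(|\mathbb{B}(0,|x|)|)\,dx=\int_0^\infty F(t)\,dt$ are established, the remaining computations are routine.
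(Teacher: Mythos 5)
Your proposal is correct and follows essentially the same route as the paper: the substitution $r\mapsto r^{\epsilon}$ in polar coordinates reducing \eqref{Thm2} to Theorem \ref{Th2} with the weights $|\mathbb{B}|^{\frac{a+1}{\epsilon}-1}$, $|\mathbb{B}|^{\frac{b+1}{\epsilon}-1}$ and $D_Q=\exp(\frac{b+1}{\epsilon p}-\frac1p)$ is exactly the paper's argument for the upper bound, and your truncated power $|\mathbb{B}(0,|x|)|^{\frac{\epsilon-b-1}{p}}$ is precisely the paper's test function $F_x(z)=|\mathbb{B}(0,|z|)|^{\frac1p(1-\frac{b+1}{\epsilon})}\chi_{\mathbb{B}(0,|x|)}$ read back through that substitution, yielding the same lower bound. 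The only deviation is that you obtain necessity of $p(a+1)=q(b+1)$ from dilation homogeneity of the two sides, whereas the paper lets the ball radius vary in the tested inequality and forces $p_0=p$; these are the same scale-invariance argument in different guises, and your computations (the eigenfunction identity and the reduction $\int_{\mathbb{G}}F(|\mathbb{B}(0,|x|)|)\,dx=\int_0^\infty F(t)\,dt$) check out.
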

\begin{proof} Assume that equality  \eqref{coeff7} holds. Let $f$  be an arbitrary positive function on the homogeneous Lie group $\mathbb{G}$ equipped with a quasi norm $|\cdot|$. To prove \eqref{Thm2} we first obtain its equivalent form using polar decomposition on $\mathbb{G}$ and then we use 
Theorem \ref{Th2} to establish it. Indeed, using the polar decomposition on $\mathbb{G}$, by setting $x=r\sigma$ and $y=t \tau$  in \eqref{Thm2} on $\mathbb{G},$  we get an equivalent inequality of  \eqref{Thm2}:
\begin{align}\label{Vt}
  \nonumber\Bigg(\int_\mathfrak{S}\int_0^\infty \left(\frac{|\mathfrak{S}|}{Q}\right)^a & r^{Qa+Q-1}\Bigg[\exp\bigg(\epsilon r^{-Q\epsilon}\left(\frac{|\mathfrak{S}|}{Q}\right)^{-\epsilon} \times \\&  \int_\mathfrak{S}\int_{0}^r\left(\frac{|\mathfrak{S}|}{Q}\right)^{\epsilon-1} t^{Q\epsilon-1}\log f(t\tau) dt\,d\tau \bigg)\Bigg]^q dr\,d\sigma  \Bigg)^\frac{1}{q} \nonumber\\&\quad\quad\quad \le C\left(\int_\mathfrak{S}\int_0^\infty r^{Qb+Q-1}\left(\frac{|\mathfrak{S}|}{Q}\right)^b f^p(r \sigma)\,dr\,d\sigma \right)^\frac{1}{p}.
\end{align}
Next, we do variable transformation $r=r_1^{\frac{1}{\epsilon}}$ and $t=t_1^\frac{1}{\epsilon}$ in \eqref{Vt} to obtain 
\begin{align}\label{in11}
 \nonumber &\Bigg(\int_\mathfrak{S}\int_0^\infty\left(\frac{|\mathfrak{S}|}{Q}\right)^a r_1^{Q\left({\frac{a+1}{\epsilon}-1}\right)}r_1^{Q-1}\bigg[\exp\Bigg(\frac{Qq}{|\mathfrak{S}|r_1^Q}\quad  \times \\&\quad\quad \int_\mathfrak{S}\int_0^{r_1}t_1^{Q-1}\,\times \log f\left(t_1^\frac{1}{\epsilon}\tau \right)\,dt_1\,d\tau\Bigg)\bigg]^q\frac{1}{\epsilon}\,dr_1 d\sigma\Bigg)^\frac{1}{q} \nonumber \\&\quad\quad\quad\quad\quad\le  C\left(\int_\mathfrak{S}\int_0^\infty\left(\frac{|\mathfrak{S}|}{Q}\right)^br_1^{Q\left({\frac{b+1}{\epsilon}-1}\right)}r_1^{Q-1}\,f^p(r_1^\frac{1}{\epsilon}\sigma)\frac{1}{\epsilon}\,dr_1 \,d\sigma \right)^\frac{1}{p}.
\end{align}
Recalling the volume of $|\mathbb{B}(0, |z|)|$ in $\mathbb{G}$ from \eqref{measure}, that is,
$  |\mathbb{B}(0, |z|)|=\frac{|z|^Q|\mathfrak{S}|}{Q} $
and 
using this in \eqref{in11}, we have
\begin{align} \label{anj}
&\nonumber\Bigg(\int_\mathfrak{S}\int_0^\infty |\mathbb{B}(0, r_1)|^{\left(\frac{a+1}{\epsilon}-1\right)} \,\,\times \\&\quad\quad\bigg[\exp\left(\frac{q}{|\mathbb{B}(0,r_1)|}\int_{\mathfrak{S}}\int_0^{r_1}\log\,F(t_1 \tau) t_1^{Q-1}\,dt_1 d\tau\right)\bigg]^qr_1^{Q-1}dr_1 d\sigma \Bigg)^{\frac{1}{q}} \nonumber \\&\le
C\epsilon^{\frac{1}{q}-\frac{1}{p}} \left(\frac{|\mathfrak{S}|}{Q}\right)^{\left(\frac{b+1}{p}-\frac{a+1}{q}\right)\left(1-\frac{1}{\epsilon}\right)}\left(\int_\mathfrak{S}\int_0^\infty|\mathbb{B}(0,r_1)|^{\left(\frac{b+1}{\epsilon}-1\right)}F^p(r_1\sigma) r_1^{Q-1}\,dr_1 d\sigma\right)^\frac{1}{p},
\end{align}
where we have written $F(r\sigma)=f(r^\frac{1}{\epsilon}\sigma).$

Again using the polar decomposition in  $\mathbb{G}$ with $t_1\tau=z$ and $r_1\sigma=w$, the inequality \eqref{anj} yields that
\begin{align}\label{in12}
   &\nonumber\left(\int_\mathbb{G} |\mathbb{B}(0,|w|)|^{\left(\frac{a+1}{\epsilon}-1\right)} \left[\exp\bigg(\frac{q}{|\mathbb{B}(0,|w|)|}\int_{\mathbb{B}(0,|w|)}\log\,F(z)\,dz\bigg)\right]^q dw\right)^{\frac{1}{q}}\\&\quad \quad\le
C\epsilon^{\frac{1}{q}-\frac{1}{p}}\left(\frac{|\mathfrak{S}|}{Q}\right)^{\left(\frac{b+1}{p}-\frac{a+1}{q}\right)\left(1-\frac{1}{\epsilon}\right)}\left(\int_\mathbb{G}|\mathbb{B}(0,|w|)|^{\left(\frac{b+1}{\epsilon}-1\right)}F^p(w)\,dw\right)^\frac{1}{p}. 
\end{align}
Recalling  the assumption  from \eqref{coeff7} that, $ \frac{b+1}{p}-\frac{a+1}{q}=0,$ we get

\begin{align}\label{inq1}
  &\nonumber\left(\int_\mathbb{G} |\mathbb{B}(0,|w|)|^{\left(\frac{a+1}{\epsilon}-1\right)} \left[\exp\bigg(\frac{q}{|\mathbb{B}(0,|w|)|}\int_{\mathbb{B}(0,|w|)}\log\,F(z)\,dz\bigg)\right]^q dw\right)^{\frac{1}{q}}\\&\quad \quad\le
C\epsilon^{\frac{1}{q}-\frac{1}{p}}\left(\int_\mathbb{G}|\mathbb{B}(0,|w|)|^{\left(\frac{b+1}{\epsilon}-1\right)}F^p(w)\,dw\right)^\frac{1}{p}.
\end{align}

Now, we note that the above inequality \eqref{inq1} is equivalent to the inequality \eqref{Thm2}. Therefore, to prove \eqref{Thm2}, it is enough to show that $D_Q$ (ref. \eqref{coeff33}) is finite.
Thus, we apply Theorem \ref{Th2} with the corresponding weights $u(w)=|\mathbb{B}(0,|w|)|^{\left(\frac{a+1}{\epsilon}-1\right)}$ and $v(w)=|\mathbb{B}(0,|w|)|^{\left(\frac{b+1}{\epsilon}-1\right)}$
on $\mathbb{G}.$ For this case,
\begin{align}\label{COEFF}
    D_Q=\sup_{w\in \mathbb{G}}|\mathbb{B}(0,|w|)|^{\frac{1}{q}-\frac{1}{p}}\,u^{\frac{1}{q}}_1(w)\left[\exp\left(\frac{1}{|\mathbb{B}(0,|w|)|}\int_{\mathbb{B}(0,|w|)}\log\frac{1}{v(s)}ds\right)\right]^{\frac{1}{p}},
\end{align} 
where $u_1(w)=\frac{1}{|\mathfrak{S}|}\int_{\mathfrak{S}} u(|w|\sigma)\,d\sigma.$
Let us now calculate the value of $u_1.$ In fact, we have
\begin{align}\label{COEFF2}
\nonumber u^{\frac{1}{q}}_1(w)&=\left(\frac{1}{|\mathfrak{S}|}\int_{\mathfrak{S}} u(|w|\sigma)\,d\sigma\right)^\frac{1}{q}\\& \nonumber=\left(\frac{1}{|\mathfrak{S}|}\right)^\frac{1}{q}\left(\int_\mathfrak{S}|\mathbb{B}(0,|w|)|^{\left(\frac{a+1}{\epsilon}-1\right)}d\sigma\right)^\frac{1}{q}\\&=\left(\frac{1}{|\mathfrak{S}|}\right)^\frac{1}{q}|\mathbb{B}(0,|w|)|^{\frac{1}{q}\left(\frac{a+1}{\epsilon}-1\right)}\left(\int_\mathfrak{S}d\sigma\right)^\frac{1}{q}=|\mathbb{B}(0,|w|)|^{\frac{1}{q}\left(\frac{a+1}{\epsilon}-1\right)}. 
\end{align} 
Now, we calculate the value of the integral in right hand side of \eqref{COEFF} by using the polar decomposition $s=(t,w)$ with $|s|=t$ as follows:
\begin{align}\label{PD}
 \nonumber\int_{\mathbb{B}(0,|w|)}\log\frac{1}{v(s)}\,ds &=\int_{\mathbb{B}(0,|w|)} \log|\mathbb{B}(0,|s|)|^{-\left(\frac{b+1}{\epsilon}-1\right)} ds\nonumber \\&\nonumber=\int_\mathfrak{S} \int_0^{|w|} \left(1-\frac{b+1}{\epsilon}\right)t^{Q-1}\log\left(\frac{|\mathfrak{S}|}{Q} t^Q\right) dt\,dw \\&=\left(1-\frac{b+1}{\epsilon}\right)\int_0^{|w|}|\mathfrak{S}|\,t^{Q-1}\log\left(\frac{|\mathfrak{S}|}{Q} t^Q\right)dt.
\end{align}
Observe that,
\begin{align}\label{eq3}
 \nonumber&\int_0^{\frac{|\mathfrak{S}|}{Q}|w|^Q}\log U dU=\log U\times U\bigg|_0^{\frac{|\mathfrak{S}|}{Q}|w|^Q}-\int_0^{\frac{|\mathfrak{S}|}{Q}|w|^Q}\frac{d}{dU}(\log U)\times U \\&\quad \quad \quad \quad = \left(\log \frac{|\mathfrak{S}|}{Q}|w|^Q\right)\left(\frac{|\mathfrak{S}|}{Q}|w|^Q\right)-\left(\frac{|\mathfrak{S}|}{Q}|w|^Q\right).
\end{align}
Using \eqref{eq3} in \eqref{PD} along with the change of variable $\frac{|\mathfrak{S}|}{Q} t^Q=U$ and using $  |\mathbb{B}(0, |w|)|=\frac{|w|^Q|\mathfrak{S}|}{Q},$ we get
\begin{align}\label{COEFF1}
  \nonumber\int_{\mathbb{B}(0,|w|)}\log\frac{1}{v(s)}\,ds&=\left(1-\frac{b+1}{\epsilon}\right)\int_0^{\frac{|\mathfrak{S}|}{Q}|w|^Q}\log U dU
  \\&\nonumber=\left(1-\frac{b+1}{\epsilon}\right)\left[\left(\log \frac{|\mathfrak{S}|}{Q}|w|^Q\right)\left(\frac{|\mathfrak{S}|}{Q}|w|^Q\right)-\left(\frac{|\mathfrak{S}|}{Q}|w|^Q\right)\right]\\&\nonumber=\left(1-\frac{b+1}{\epsilon}\right)\left(\frac{|\mathfrak{S}|}{Q} |w|^Q\right) \left[\log \left(\frac{|\mathfrak{S}|}{Q} |w|^Q\right) -1\right]\nonumber\\& =|\mathbb{B}(0,|w|)|\left(\log|\mathbb{B}(0,|w|)|^{\left(1-\frac{b+1}{\epsilon}\right)}+\frac{b+1}{\epsilon}-1\right).
\end{align}
Substituting the values from \eqref{COEFF2} and \eqref{COEFF1} in \eqref{COEFF}, we obtain
\begin{align}
 &\nonumber D_Q=\sup_{w\in \mathbb{G}}|\mathbb{B}(0,|w|)|^{\frac{1}{q}-\frac{1}{p}}\,|\mathbb{B}(0,|w|)|^{\frac{1}{q}\left(\frac{a+1}{\epsilon}-1\right)}\\&\nonumber\left[\exp\left(\frac{1}{|\mathbb{B}(0,|w|)|}\,|\mathbb{B}(0,|w|)|\left(\log|\mathbb{B}(0,|w|)|^{\left(1-\frac{b+1}{\epsilon}\right)}+\frac{b+1}{\epsilon}-1\right) \right)\right]^\frac{1}{p}\\&\nonumber=\sup_{w\in\mathbb{G}}|\mathbb{B}(0,|w|)|^{\left(\frac{a+1}{q\epsilon}-\frac{1}{p}\right)}\left[\exp\left(\log|\mathbb{B}(0,|w|)|^{1-\frac{b+1}{\epsilon}}+\frac{b+1}{\epsilon}-1\right)\right]^\frac{1}{p}
 \\&=e^{\frac{1}{p}\left(\frac{b+1}{\epsilon}-1\right)}\sup_{w\in\mathbb{G}}|\mathbb{B}(0,|w|)|^{\left(\frac{a+1}{q\epsilon}-\frac{1}{p}\right)}|\mathbb{B}(0,|w|)|^{\frac{1}{p}-\frac{b+1}{\epsilon p}},
 \end{align}
 which implies that
 \begin{align}\label{coeff0}
  D_Q=\exp\bigg({\frac{1}{p}\left(\frac{b+1}{\epsilon}-1\right)}\bigg)\sup_{w\in\mathbb{G}}|\mathbb{B}(0,|w|)|^{\frac{1}{\epsilon}\left(\frac{a+1}{q}-\frac{b+1}{p}\right)}.
\end{align}
Thus, using the assumption \eqref{coeff7} in \eqref{coeff0}, we have
\begin{align}\label{DQ111}
   D_Q=\exp\bigg({\frac{1}{p}\left(\frac{b+1}{\epsilon}-1\right)}\bigg),
\end{align} which is finite.
Therefore, by Theorem \ref{Th2} inequality \eqref{Thm2} holds  for each positive function $f$ defined on $\mathbb{G}.$

Moreover, by  \eqref{DQ} and using the explicit value of $D_Q$ from \eqref{DQ111}, we get
\begin{align}
  \epsilon^{\frac{1}{q}-\frac{1}{p}} C\le \left(\frac{p}{q}\right)^\frac{1}{q}\,\exp\bigg({\frac{b+1}{\epsilon p}}\bigg)\,\,\text{implies that}\,\, C\le \left(\frac{p}{q}\right)^\frac{1}{q}\epsilon^{\frac{1}{p}-\frac{1}{q}}\,\exp\bigg({\frac{b+1}{\epsilon p}}\bigg).
\end{align}

Conversely, we assume that \eqref{Thm2} holds for all positive functions $f$ on  $\mathbb{G}$. Again we will use the equivalent form \eqref{in12} of   \eqref{Thm2}. It is clear from \eqref{in12} that the following inequality is true for any ball $\mathbb{B}(0, |x|)$ of radius  $x \in \mathbb{G}:$
\begin{align}\label{in122}
   &\nonumber\left(\int_\mathbb{\mathbb{B}(0, |x|)} |\mathbb{B}(0,|w|)|^{\left(\frac{a+1}{\epsilon}-1\right)} \left[\exp\frac{1}{|\mathbb{B}(0,|w|)|}\int_{\mathbb{B}(0,|w|)}\log\,F(z)\,dz\right]^q dw\right)^{\frac{1}{q}}\\&\quad \quad\le
C\epsilon^{\frac{1}{q}-\frac{1}{p}}\left(\frac{|\mathfrak{S}|}{Q}\right)^{\left(\frac{b+1}{p}-\frac{a+1}{q}\right)\left(1-\frac{1}{\epsilon}\right)}\left(\int_\mathbb{G}|\mathbb{B}(0,|w|)|^{\left(\frac{b+1}{\epsilon}-1\right)}F^p(w)\,dw\right)^\frac{1}{p}. 
\end{align}

So, we will test the inequality \eqref{in122} (and therefore, equivalently, inequality \eqref{Thm2}) with the test function, for $x \in \mathbb{G}$ as chosen above,
\begin{align*}
 F_x(z) =|\mathbb{B}(0,|z|)|^{\frac{1}{p}\left(1-\frac{b+1}{\epsilon}\right)} \,\chi_{\mathbb{B} (0,|x|)},\quad\quad z\in \mathbb{G},
\end{align*}
where $\chi_{\mathbb{B} (0,|x|)}$ is the characteristic function of $\mathbb{B} (0,|x|)$ in $\mathbb{G}.$ Indeed, we  obtain by noting $|z|\leq |w|\leq |x|$ that
\begin{align}\label{COEFF6}
 &\nonumber\left(\int_{\mathbb{B}(0,|x|)} |\mathbb{B}(0,|w|)|^{\left(\frac{a+1}{\epsilon}-1\right)}\left[\exp\frac{1}{|\mathbb{B}(0,|w|)|}\int_{\mathbb{B}(0,|w|)}\log|\mathbb{B}(0,|z|)|^{\frac{1}{p}\left(1-\frac{b+1}{\epsilon}\right)}\,dz\right]^q dw\right)^{\frac{1}{q}}\\&\quad\quad\quad\quad\quad\quad\le
C\epsilon^{\frac{1}{q}-\frac{1}{p}}\left(\frac{|\mathfrak{S}|}{Q}\right)^{\left(\frac{b+1}{p}-\frac{a+1}{q}\right)\left(1-\frac{1}{\epsilon}\right)}|\mathbb{B}(0,|x|)|^\frac{1}{p}  =\tilde{C}\,|\mathbb{B}(0,|x|)|^\frac{1}{p},
\end{align}

where\begin{align}\label{VOC}
 \tilde{C}= C\epsilon^{\frac{1}{q}-\frac{1}{p}}\left(\frac{|\mathfrak{S}|}{Q}\right)^{\left(\frac{b+1}{p}-\frac{a+1}{q}\right)\left(1-\frac{1}{\epsilon}\right)}.
\end{align}
Again, using the polar decomposition on $\mathbb{G}$ and doing calculations similar to  \eqref{PD}, we get
\begin{align}\label{COEFF5}
 \int_{\mathbb{B}(0,|w|)}\log|\mathbb{B}(0,|z|)|^{\frac{1}{p}\left(1-\frac{b+1}{\epsilon}\right)}dz=|\mathbb{B}(0,|w|)|\left(\log|\mathbb{B}(0,|w|)|^{\frac{1}{p}\left(1-\frac{b+1}{\epsilon}\right)}+\frac{1}{p}\left(\frac{b+1}{\epsilon}-1\right)\right).    
\end{align}
Using \eqref{COEFF5} in \eqref{COEFF6} we obtain
\begin{align*}
  &\Bigg(\int_{\mathbb{B}(0,|x|)}|\mathbb{B}(0,|w|)|^{\frac{a+1}{\epsilon}-1}\Bigg(\exp \frac{1}{|\mathbb{B}(0,|w|)|}\Bigg\{ |\mathbb{B}(0,|w|)|\times\\&\nonumber\quad\quad\left(\log|\mathbb{B}(0,|w|)|^{\frac{1}{p}\left(1-\frac{b+1}{\epsilon}\right)}+\frac{1}{p}\left(\frac{b+1}{\epsilon}-1\right)\right)\Bigg\}\Bigg)^q dw   \Bigg)^\frac{1}{q}\le \tilde{C}|\mathbb{B}(0,|x|)|^\frac{1}{p}.
\end{align*}
This implies that

\begin{align}\label{F1}
  \nonumber&\left(\int_{\mathbb{B}(0,|x|)}|\mathbb{B}(0,|w|)|^{\frac{a+1}{\epsilon}-1}\left(|\mathbb{B}(0,|w|)|^{\frac{1}{p}\left(1-\frac{b+1}{\epsilon}\right)}\,\exp\left(\frac{1}{p}\left(\frac{b+1}{\epsilon}-1\right)\right)\right)^q dw\right)^\frac{1}{q}\\&\quad\quad\quad\quad\quad\quad\quad\quad\le \tilde{C}|\mathbb{B}(0,|x|)|^\frac{1}{p}.   
\end{align}
Therefore, from \eqref{F1}, we can rewrite \eqref{COEFF6} in the following form 
\begin{align*}
 &\exp\left\{\frac{b+1}{\epsilon p}-\frac{1}{p}\right\}\left(\int_{\mathbb{B}(0,|x|)}|\mathbb{B}(0,|w|)|^{\frac{a+1}{\epsilon}-1+\frac{q}{p}\left(1-\frac{b+1}{\epsilon}\right)}\, dw\right)^\frac{1}{q}\le \tilde{C}\, |\mathbb{B}(0,|x|)|^\frac{1}{p},
\end{align*}
which further can be rewritten  as
\begin{align} \label{eq339}
  \left(\int_{\mathbb{B}(0,|x|)}|\mathbb{B}(0,|w|)|^{\left(\frac{q}{p}\left(\frac{a+1}{\epsilon}\frac{p}{q}-\frac{b+1}{\epsilon}+1\right)-1\right)} dw\right)^\frac{1}{q}\le \tilde{C}\,\exp \left\{\frac{1}{p}-\frac{b+1}{\epsilon p} \right\} |\mathbb{B}(0,|x|)|^\frac{1}{p}. 
\end{align}
Therefore, the inequality \eqref{eq339} implies that
\begin{align}\label{in3}
    \left(\int_{\mathbb{B}(0,|x|)}|\mathbb{B}(0,|w|)|^{\frac{q}{p_0}-1} dw\right)^\frac{1}{q}\le \tilde{C}\,\exp\left\{\frac{1}{p}-\frac{b+1}{\epsilon p}\right\} |\mathbb{B}(0,|x|)|^\frac{1}{p},
\end{align}
where 
 \begin{align}\label{COEFF11}
    p_0=\frac{p}{\frac{a+1}{\epsilon}\frac{p}{q}-\frac{b+1}{\epsilon}+1}.
\end{align}
Now, again using the polar decomposition on $\mathbb{G},$  from \eqref{in3} we obtain 
\begin{align*}
 &\nonumber\left(\int_{\mathbb{B}(0,|x|)}|\mathbb{B}(0,|w|)|^{\frac{q}{p_0}-1}dw\right)^\frac{1}{q}=\left(\left(\frac{|\mathfrak{S}|}{Q}\right)^{\frac{q}{p_0}-1}\int_{\mathbb{B}(0,|x|)} |w|^{Q(\frac{q}{p_0}-1)}dw\right)^\frac{1}{q}\\&\nonumber\quad\quad\quad\quad=\left(\left(\frac{|\mathfrak{S}|}{Q}\right)^{\frac{q}{p_0}-1}\int_{\mathfrak{S}}\int_0^{|x|} r^{Q-1}\,r^{Q(\frac{q}{p_0}-1)}dr \,d\rho\right)^{\frac{1}{q}}\\&\nonumber\quad\quad\quad\quad=\left(\left(\frac{|\mathfrak{S}|}{Q}\right)^{\frac{q}{p_0}-1}\int_0^{|x|} |\mathfrak{S}|\,r^{Q\frac{q}{p_0}-1}dr \right)^{\frac{1}{q}}=\left(\left(\frac{|\mathfrak{S}|}{Q}\right)^{\frac{q}{p_0}-1} |\mathfrak{S}|\,|x|^{Q\frac{q}{p_0}}\left(\frac{p_0}{Qq}\right) \right)^{\frac{1}{q}}\\&\quad\quad\quad\quad\quad=\left(\frac{p_0}{q}\right)^\frac{1}{q}\left(\frac{|x|^Q\,|\mathfrak{S}|}{Q}\right)^\frac{1}{p_0}=|\mathbb{B}(0,|x|)|^\frac{1}{p_0}\left(\frac{p_0}{q}\right)^\frac{1}{q}.
\end{align*}
Thus we have
\begin{align}\label{f}
\left(\int_{\mathbb{B}(0,|x|)}|\mathbb{B}(0,|w|)|^{\frac{q}{p_0}-1}dw\right)^\frac{1}{q}=|\mathbb{B}(0,|x|)|^\frac{1}{p_0}\left(\frac{p_0}{q}\right)^\frac{1}{q}.
\end{align}
Therefore, using \eqref{f} in \eqref{in3}, we deduce that
\begin{align*}
    |\mathbb{B}(0,|x|)|^{\frac{1}{p_0}}\left(\frac{p_0}{q}\right)^\frac{1}{q}\le\tilde{C}\,\exp\bigg({\frac{1}{p}-\frac{b+1}{\epsilon p}}\bigg)\,|\mathbb{B}(0,|x|)|^\frac{1}{p},
\end{align*} 
which implies that
\begin{align}\label{eq123} |\mathbb{B}(0,|x|)|^{\frac{1}{p_0}-\frac{1}{p}}\le \tilde{C}\,\exp\bigg({\frac{1}{p}-\frac{b+1}{\epsilon p}}\bigg)\left(\frac{q}{p_0}\right)^\frac{1}{q}.
\end{align}
Since \eqref{eq123} holds for every $x \in \mathbb{G},$ this implies that  $p_0=p,$ that is, from \eqref{COEFF11}, 
 $$\frac{a+1}{\epsilon}\frac{p}{q}-\frac{b+1}{\epsilon}+1=1.$$
 Hence \begin{align}\label{final} 
     \frac{a+1}{q}-\frac{b+1}{p}=0,
     \end{align} which is same as \eqref{coeff7}. 
     
     Again substituting the value of $\tilde{C}$
 from \eqref{VOC} in \eqref{eq123}, we estimate that
 \begin{align*}
  |\mathbb{B}(0,|x|)|^{\frac{1}{p_0}-\frac{1}{p}}\le \exp\bigg({\frac{1}{p}-\frac{b+1}{\epsilon p}}\bigg)\left(\frac{q}{p_0}\right)^\frac{1}{q}  C\epsilon^{\frac{1}{q}-\frac{1}{p}}\left(\frac{|\mathfrak{S}|}{Q}\right)^{\left(\frac{b+1}{p}-\frac{a+1}{q}\right)\left(1-\frac{1}{\epsilon}\right)} 
 \end{align*}
which with the fact that $p=p_0$ and  \eqref{final} gives that
 \begin{align}
  C\ge \left(\frac{p}{q}\right)^\frac{1}{q}\,\exp\bigg({\frac{b+1}{\epsilon p}-\frac{1}{p}}\bigg) \epsilon^{\frac{1}{p}-\frac{1}{q}}.
 \end{align}
 This completes the proof of the theorem. \end{proof}
Next, we also state the following result which is conjugate of Theorem
\ref{Th2}:
\begin{theorem} \label{congthm}
Let  $u$ and $v$ be weight functions on a homogeneous Lie group $\mathbb{G}$ of homogeneous dimension $Q,$ equipped with an arbitrary quasi norm $|\cdot|,$ and let  $0<p\leq q<\infty$ and $\epsilon>0.$  Then, there exist a positive constant $C$ such that, for all positive functions $f$ on $\mathbb{G}$, the following inequality holds 
\begin{align} \label{main2}
    &\nonumber \left(\int_{\mathbb{G}} \left[ \exp \left( \epsilon\left|\mathbb{B}(0, |x|)\right|^\epsilon \int_{\mathbb{G}\backslash\mathbb{B}(0, |x|)}\left|\mathbb{B}(0,|x|)\right|^{-\epsilon-1} \log f(y)\, dy \right) \right]^q u(x)\, dx \right)^\frac{1}{q}  \\&\quad\quad\quad\quad\quad \le C \left(\int_{\mathbb{G}} f^p(x)\, v(x)\,dx \right)^{\frac{1}{p}},
\end{align}
provided that 
\begin{equation}
   \Tilde{D}_Q:= \sup_{x \in \mathbb{G}} |\mathbb{B}(0, |x|)|^{\frac{1}{q}-\frac{1}{p}}\, \tilde{u}^{\frac{1}{q}}(x) \left[ \exp \left( \frac{1}{|\mathbb{B}(0, |x|)|} \int_{\mathbb{B}(0, |x|)} \log \frac{1}{\tilde{v}(y)}\,dy \right)\right]^{\frac{1}{p}}<\infty,
\end{equation} where $\tilde{u}$ and $\tilde{v}$ are the spherical average of $u$ and $v$ respectively, given by, 
$$\tilde{u}(s)=u(s^{-\frac{1}{\epsilon}})\frac{1}{\epsilon}|s|^{-Q\left(1+\frac{1}{\epsilon}\right)}\quad\quad \tilde{v}(s)=v(s^{-\frac{1}{\epsilon}})\frac{1}{\epsilon}|s|^{-Q\left(1+\frac{1}{\epsilon}\right)}.$$

Moreover, the optimal constant $C$ in \eqref{main2} can estimated as follows:
\begin{equation}\label{DQ1}
    0<C \leq \left(\frac{p}{q} \right)^{\frac{1}{q}} \exp\bigg({\frac{1}{p}}\bigg) \tilde{D}_Q.
\end{equation}
\end{theorem}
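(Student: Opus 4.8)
The plan is to deduce the conjugate inequality \eqref{main2} from the already proved inequality \eqref{main} of Theorem \ref{Th2} through an inversion-type change of variables that turns the exterior region $\mathbb{G}\backslash\mathbb{B}(0,|x|)$ into a quasi-ball. Concretely, I would set $x=\xi^{-1/\epsilon}$ and $y=s^{-1/\epsilon}$, acting radially as $|x|=|\xi|^{-1/\epsilon}$ and $|y|=|s|^{-1/\epsilon}$ with the angular variable preserved. Using the polar decomposition \eqref{polar}, a direct computation gives the Jacobian $dy=\frac{1}{\epsilon}|s|^{-Q(1+1/\epsilon)}\,ds$, which is exactly the factor built into the definitions of $\tilde u$ and $\tilde v$. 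Under this map the condition $|y|>|x|$ becomes $|s|<|\xi|$, so the inner integral over $\mathbb{G}\backslash\mathbb{B}(0,|x|)$ is converted into an integral over the quasi-ball $\mathbb{B}(0,|\xi|)$.

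Next I would track the volume factors. Writing $|\mathbb{B}(0,|x|)|=\frac{|\mathfrak{S}|}{Q}|x|^Q$ as in \eqref{measure}, the combination $|\mathbb{B}(0,|y|)|^{-\epsilon-1}\,dy$ collapses after substitution to a constant multiple of $ds$, all powers of $|s|$ cancelling; the outer prefactor $\epsilon|\mathbb{B}(0,|x|)|^{\epsilon}$ then combines with that constant to produce precisely $\frac{1}{|\mathbb{B}(0,|\xi|)|}$. Hence, writing $g(s):=f(s^{-1/\epsilon})$, the exponential expression in \eqref{main2} becomes
\[
\exp\left(\frac{1}{|\mathbb{B}(0,|\xi|)|}\int_{\mathbb{B}(0,|\xi|)}\log g(s)\,ds\right),
\]
while $u(x)\,dx=\tilde u(\xi)\,d\xi$ and $f^{p}(x)v(x)\,dx=g^{p}(\xi)\tilde v(\xi)\,d\xi$. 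In this way \eqref{main2} for the triple $(u,v,f)$ is shown to be equivalent to \eqref{main} for the triple $(\tilde u,\tilde v,g)$.

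Since $f\mapsto g$ is a bijection of positive functions and the transformation is measure-theoretic (the origin being null), I would then apply Theorem \ref{Th2} to the weights $\tilde u,\tilde v$. Its hypothesis \eqref{coeff33} for this pair is exactly the finiteness of $\tilde D_Q$, the spherical average in \eqref{coeff33} being taken of $\tilde u$, and its conclusion \eqref{DQ} gives $0<C\le(p/q)^{1/q}\exp(1/p)\,\tilde D_Q$, which is \eqref{DQ1}. As the change of variables preserves the weighted $L^q$- and $L^p$-quasinorms, the optimal constant is unchanged and the estimate is sharp in the same sense as in Theorem \ref{Th2}.

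I expect the main obstacle to be the bookkeeping of the second step: verifying that the exterior mean against $|\mathbb{B}(0,|y|)|^{-\epsilon-1}$, rescaled by $\epsilon|\mathbb{B}(0,|x|)|^{\epsilon}$, collapses exactly to the unweighted logarithmic mean $\frac{1}{|\mathbb{B}(0,|\xi|)|}\int_{\mathbb{B}(0,|\xi|)}\log g$. One must follow the powers of $\frac{|\mathfrak{S}|}{Q}$ and of $|s|$ with care; it is precisely the matching of these exponents that pins the inversion exponent to $-1/\epsilon$ and reproduces the Jacobian weight appearing in $\tilde u$ and $\tilde v$. Once this identity is in place, the remaining assertions follow formally from Theorem \ref{Th2}.
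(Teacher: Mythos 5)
Your proposal is correct and follows essentially the same route as the paper: the paper likewise reduces \eqref{main2} to Theorem \ref{Th2} via the radial inversion $y=s^{-1/\epsilon}$, whose Jacobian $\frac{1}{\epsilon}|s|^{-Q(1+1/\epsilon)}$ is exactly the factor built into $\tilde u$ and $\tilde v$, turning the exterior logarithmic mean into the unweighted mean over a quasi-ball. Your write-up is in fact more explicit than the paper's (which only sketches the substitution), and your reading of $|\mathbb{B}(0,|y|)|^{-\epsilon-1}$ in the integrand and of the spherical average of $\tilde u$ in $\tilde D_Q$ is the intended one.
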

\begin{proof}
 Let $f$ be a positive function on the homogeneous group $\mathbb{G}$ with an arbitrary quasi norm $|\cdot|$. Now, using the polar decomposition of \eqref{main2} on the homogeneous group $\mathbb{G}$  similar to \eqref{Vt} in Theorem \ref{THM2} and again, making some variable transformations similar to \eqref{in11} and using \eqref{measure}, \eqref{main2} can be written  as 
 \begin{align}
  \nonumber &\left(\int_\mathbb{G}\left[\exp\left(\frac{1}{|\mathbb{B}(0,|w|)|}\int_{\mathbb{B}(0,|w|)}\log\,f(z^{-\frac{1}{\epsilon}})\,dz\right)\right]^q \tilde{u}(w)\,dw\right)^\frac{1}{q}\le\\& \quad\quad\quad\quad\quad C \left(\int_\mathbb{G} f^p(w^{-\frac{1}{\epsilon}})\,\tilde{v}(w)\,dw\right)^\frac{1}{p}.
 \end{align}
 Again, applying Theorem \ref{Th2}, using polar decomposition, \eqref{measure} and some variable transformations
 we get the required result \eqref{DQ1}.
 \end{proof}
Next, we will define the conjugate of Theorem \ref{THM2} for the power weights  $u(x)=|\mathbb{B}(0,|x|)|^a$ and $v(x)=|\mathbb{B}(0,|x|)|^b$. We have the following inequality.
\begin{theorem} \label{thm3.5} Let $\mathbb{G}$ be a homogeneous group with the homogeneous dimension  $Q$ equipped with a quasi norm $|\cdot|.$
Let $0<p\le q<\infty,$ and let $a,b \in \mathbb{R}$ and $\epsilon>0.$ Then  for all positive functions $f$ on $\mathbb{G}$ the inequality 
\begin{align}\label{Thm4}
   \nonumber&\left(\int_\mathbb{G}\left[\exp\left(\epsilon|\mathbb{B}(0,|x|)|^\epsilon \int_{\mathbb{G}\backslash\mathbb{B}(0,|x|)}|\mathbb{B}(0,|y|)|^{(-\epsilon-1)}\log f(y)\,dy\right)\right]^q|\mathbb{B}(0,|x|)|^a dx\right)^\frac{1}{q}\le\\&\quad\quad\quad\quad\quad\quad\quad\quad\quad C\left(\int_\mathbb{G}f^p(x)|\mathbb{B}(0,|x|)|^b\,dx\right)^\frac{1}{p}
\end{align}
holds for some finite constant $C,$ if and only if 
\begin{align*}
  p\left(a+1\right) -q\left(b+1\right)=0.
\end{align*} Moreover, the least possible constant $C$ such that \eqref{Thm4} holds can be estimated as follows:
\begin{align*}
 \left(\frac{p}{q}\right)^\frac{1}{q} \epsilon^{\frac{1}{p}-\frac{1}{q}}\, \exp\bigg({-\left(\frac{b+1}{\epsilon p}\bigg)-\frac{1}{p}\right)}  \le C\le \left(\frac{p}{q}\right)^\frac{1}{q} \epsilon^{\frac{1}{p}-\frac{1}{q}}\, \exp\bigg({-\left(\frac{b+1}{\epsilon p}\right)}\bigg). 
\end{align*}
\end{theorem}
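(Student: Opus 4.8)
The plan is to prove Theorem \ref{thm3.5} in exact parallel with the proof of Theorem \ref{THM2}, but with the conjugate tool Theorem \ref{congthm} playing the role that Theorem \ref{Th2} played there. As in that power-weight reduction, the two implications are handled separately: sufficiency of the balance condition $p(a+1)-q(b+1)=0$ follows by checking that the coefficient $\tilde{D}_Q$ of Theorem \ref{congthm} is finite for the choice $u(x)=|\mathbb{B}(0,|x|)|^a$ and $v(x)=|\mathbb{B}(0,|x|)|^b$, while its necessity, together with the lower bound for $C$, follows by testing \eqref{Thm4} against a suitable radial function.

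For the sufficiency direction, I would first apply Theorem \ref{congthm} with $u(x)=|\mathbb{B}(0,|x|)|^a$ and $v(x)=|\mathbb{B}(0,|x|)|^b$. The weights $\tilde u$ and $\tilde v$ prescribed there arise from $u,v$ via the inversion--dilation $s\mapsto s^{-1/\epsilon}$; since $|s^{-1/\epsilon}|=|s|^{-1/\epsilon}$ and $|\mathbb{B}(0,|s|)|=\tfrac{|\mathfrak{S}|}{Q}|s|^Q$, a direct computation shows that the Jacobian factor $\tfrac{1}{\epsilon}|s|^{-Q(1+1/\epsilon)}$ combines with the power weights so that $\tilde u$ and $\tilde v$ are again pure powers of $|\mathbb{B}(0,|s|)|$. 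I would then evaluate $\tilde D_Q$ exactly as $D_Q$ was evaluated in Theorem \ref{THM2}: the spherical average $\tilde u_1$ collapses to a single power of the ball volume, and the interior exponential factor is computed by polar decomposition together with the elementary primitive $\int_0^{M}\log U\,dU=M(\log M-1)$ used in \eqref{eq3}. This gives an explicit value of the form $\tilde D_Q=\exp\!\big(-\tfrac{1}{p}(\tfrac{b+1}{\epsilon}+1)\big)$ times a supremum of a power of $|\mathbb{B}(0,|w|)|$ whose exponent is proportional to $\tfrac{a+1}{q}-\tfrac{b+1}{p}$; this supremum is finite precisely when the balance condition holds, whence $\tilde D_Q<\infty$. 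Tracking the $\epsilon$-powers produced by the radial change of variables $t=t_1^{1/\epsilon}$ (as in \eqref{in11}--\eqref{inq1}) and substituting $\tilde D_Q$ into \eqref{DQ1} then yields the claimed upper bound for $C$.

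For the necessity direction, I would run the argument of \eqref{in122}--\eqref{eq123} in conjugate form. After passing to the equivalent formulation of \eqref{Thm4} obtained from the polar decomposition and the substitution $s\mapsto s^{-1/\epsilon}$, the complementary averaging operator becomes an ordinary ball average, so I can test with the image of the function $F_x$ from Theorem \ref{THM2}, namely a radial power of $|\mathbb{B}(0,|z|)|$ chosen so that the right-hand integrand is constant on the test region, cut off to the appropriate quasi-ball. Computing both sides explicitly (again via $\int\log U\,dU$ and the polar identity \eqref{f}) reduces the inequality to $|\mathbb{B}(0,|x|)|^{\frac{1}{p_0}-\frac{1}{p}}\le(\text{const})$ for every $x\in\mathbb{G}$, which forces $p_0=p$ and hence $p(a+1)-q(b+1)=0$. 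Reading off the surviving constant then produces the matching lower bound for $C$.

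The main obstacle will be the purely bookkeeping task of getting every exponent right through the inversion $s\mapsto s^{-1/\epsilon}$: one must correctly combine the factor $\tfrac{1}{\epsilon}|s|^{-Q(1+1/\epsilon)}$ entering $\tilde u,\tilde v$ with the power-weight exponents and with the $\epsilon$-powers generated by the radial substitution, because it is exactly the sign flip $\tfrac{b+1}{\epsilon p}\mapsto-\tfrac{b+1}{\epsilon p}$ relative to Theorem \ref{THM2} that has to emerge. No new analytic input beyond Theorem \ref{congthm} is required; the finiteness criterion for $\tilde D_Q$ and the saturating choice of test function are the only genuinely decisive steps.
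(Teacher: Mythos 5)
Your proposal is correct and matches the paper's intent: the paper itself omits the proof of Theorem \ref{thm3.5}, stating only that it ``follows exactly same lines as the proof of Theorem \ref{THM2}'', and your plan---reducing to the conjugate Theorem \ref{congthm}, evaluating $\tilde{D}_Q$ for the power weights to get the upper bound and the sufficiency of $p(a+1)-q(b+1)=0$, then testing with a radial power function for necessity and the lower bound---is exactly that parallel argument, with the constants you predict agreeing with those stated. No substantive difference from the paper's (omitted) route.
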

\begin{proof}
The  proof of this theorem 
follows exactly same lines as the proof of Theorem 
\ref{THM2} and therefore, we omit the proof.
\end{proof}
\begin{rem}
If we take $p=q$ and  so that $a=b,$ then the inequality \eqref{Thm4} holds with the constant $C=\exp\bigg(-{\frac{b+1}{\epsilon p}}\bigg).$ By using the test function in inequality \eqref{Thm4} 
$$f_\delta(x)=\begin{cases}\exp\bigg({\frac{b+1}{\epsilon p}}\bigg)|\mathbb{B}(0,1)|^{-(b+1)}|x|^{-\frac{Q}{p}(b+1-\epsilon \delta)},\quad x\in\mathbb{B}(0,1),\\\exp\bigg({\frac{b+1}{\epsilon p}}\bigg)|\mathbb{B}(0,1)|^{-(b+1)}|x|^{-\frac{Q}{p}(b+1+\epsilon \delta)},\quad x\in \mathbb{G}\backslash\mathbb{B}(0,1),\end{cases}$$ and suppose that $\delta\rightarrow 0^+,$ it can be shown that the constant $C=\exp\bigg(-{\frac{b+1}{\epsilon p}}\bigg)$ is sharp.
\end{rem}

\section*{acknowledgement} 

The completion of a portion of this paper was made possible through the support of Ghent Analysis \& PDE Centre at Ghent University, Belgium. The second author expresses sincere gratitude to the centre for their financial assistance and warm hospitality during her research visit. The research activities of MR are funded by the FWO Odysseus 1 grant G.0H94.18N: Analysis and Partial Differential Equations, the Methusalem programme of the Ghent University Special Research Fund (BOF) under grant number 01M01021, and the EPSRC grant EP/R003025/2. AS acknowledges the support of a UGC Non-NET fellowship from Banaras Hindu University, India.

\end{document}